\documentclass[12pt,reqno]{amsart}
\usepackage{hyperref}
\usepackage{amsmath,amssymb}
\numberwithin{equation}{section}

\usepackage{float}
\usepackage{pgf,tikz}

\newcommand \cochord{\operatorname{co-chord}}
\newcommand \minmax{\operatorname{min-max}}
\newcommand \reg{\operatorname{reg}}
\newcommand\ma{\operatorname{m}}
\newcommand\link{\operatorname{link}}

\newcommand\PP{\mathcal{P}}

\newcommand \X{\mathcal{X}}

\newcommand \K{\mathbb{K}}
\newcommand \s{\mathcal{S}}

\newtheorem{theorem}{Theorem}[section]
\newtheorem{definition}[theorem]{Definition}
\newtheorem{lemma}[theorem]{Lemma}
\newtheorem{proposition}[theorem]{Proposition}

\newtheorem{obs}[theorem]{Observation}
\newtheorem{question}[theorem]{Question}
\newtheorem{remark}[theorem]{Remark}
\newtheorem{conj}[theorem]{Conjecture}
\newtheorem{corollary}[theorem]{Corollary}

\newtheorem*{notation*}{Notation}
\newtheorem{notation}[theorem]{Notation}
\setlength{\textheight}{23cm}
\setlength{\textwidth}{16cm}
\setlength{\topmargin}{-0.8cm}
\setlength{\parskip}{0.3\baselineskip}
\hoffset=-1.4cm

\begin{document}

\title[Upper bounds for the regularity of powers of edge ideals of graphs]
{Upper bounds for the regularity of powers of edge ideals of graphs}
\author{A. V. Jayanthan}
\email{jayanav@iitm.ac.in}
\address{Department of Mathematics, Indian Institute of Technology
Madras, Chennai, INDIA - 600036}
\author{S. Selvaraja}
\email{selva.y2s@gmail.com}
\address{Chennai Mathematical Institute, H1, SIPCOT IT Park, Siruseri, Kelambakkam, Chennai,
INDIA - 603103.}
\thanks{AMS Classification 2010: 13D02, 13F20, 05C25}
\keywords{Castelnuovo-Mumford regularity, powers of edge ideals,
  vertex decomposable graphs}

  \maketitle

\begin{abstract}
Let $G$ be a finite simple graph and $I(G)$ denote the corresponding
edge ideal. In this paper, we obtain upper bounds for the
Castelnuovo-Mumford regularity of $I(G)^q$
in terms of certain combinatorial invariants associated with $G$. We also prove
a weaker version of a conjecture by Alilooee, Banerjee,
Beyarslan and H\`a on an upper bound for the
regularity of $I(G)^q$ and we prove the
conjectured upper bound for the class of vertex decomposable graphs. Using these 
results, we explicitly compute the regularity of $I(G)^q$ for several classes of graphs. 
\end{abstract}

\section{Introduction}
Let $I$ be a homogeneous ideal of a polynomial ring 
$R = \K[x_1,\ldots,x_n]$ over a field $\K$ with usual grading.
In \cite{BEL91}, Bertram, Ein and Lazarsfeld  
have initiated  the study of the Castelnuovo-Mumford regularity,
henceforth denoted as $\reg(-)$, of $I^q$
as  a function of $q$ by  proving that if $I$ is the defining ideal of a smooth complex 
projective variety, then $\reg(I^q)$ is bounded by a linear function of $q$. 
Then, Chandler \cite{Cha97} and Geramita, Gimigliano and Pitteloud \cite{GGP95} proved that
if $\dim(R/I)\leq 1$, then $\reg(I^q) \leq q\reg(I)$ for all $q\geq 1$. 
However, Swanson \cite{Swa97} proved that there exists $k \geq 1$ such that
for all $q \geq 1$, $\reg(I^q)\leq kq$. Thereafter, Cutkosky, Herzog and Trung, \cite{CHT}, 
and independently Kodiyalam \cite{vijay}, proved that for a homogeneous ideal 
$I$ in a polynomial ring,  $\reg(I^q)$ is a linear 
function for $q \gg 0$ i.e., 
there exist non negative integers $a$ and $b$ depending on $I$ such that
$\reg(I^q)=aq+b \text{ for all $q \gg 0$}.$
While the coefficient $a$ is well-understood (\cite{CHT}, \cite{vijay}, \cite{TW}), 
the free constant $b$ and the stabilization index $q_0=\min\{q' \mid \reg(I^q)=aq+b, 
\text{ for all } q \geq q'\}$ are quite mysterious.
Therefore, the attention has been to identify classes for which the linear
polynomial can be computed or bounded using invariants associated to $I$.
There have been some attempts on computing the free constant and stabilization index
for several class of ideals. For instance, if $I$  is a equigenerated homogeneous ideal, then 
$b$ is related to the regularity of fibers of certain projection map 
(see for example, \cite{Romer2001}). If
$I$ is $(x_1,\ldots,x_n)$-primary, then $q_0$ can be related to partial
regularity of the Rees algebra of $I$ (see for example, \cite{Berle}).
In this paper, we study the regularity
of powers of edge ideals associated to finite simple graphs.

Let $G$ be a finite simple graph without isolated vertices on the vertex set $\{x_1, \ldots,
x_n\}$ and $I(G) := (\{x_ix_j \mid \{x_i,x_j\} \in E(G)\}) \subset \K[x_1,
\ldots, x_n]$ be the edge ideal corresponding to the graph $G$.
It is known that
$\reg(I(G)^q) = 2q + b$ for some $b$ and $q \geq q_0$. 
There are very few classes of graphs for which $b$ and $q_0$ are known. We
refer the reader to \cite{BBH17} and the references cited there for a
review of results in the literature in this direction.
While the aim is to obtain the linear polynomial corresponding to
$\reg(I(G)^q)$, it seems unlikely that a single combinatorial invariant
will represent the constant term for all graphs. This naturally give
rise to two directions of research. One direction is to obtain linear polynomials
for particular classes of graphs. Another direction is to obtain upper and
lower bounds for $\reg(I(G)^q)$ using combinatorial invariants
associated to the graph $G$. It was proved by Beyarslan, H\`a and Trung that $2q + \nu(G) -
1 \leq \reg(I(G)^q)$ for all $q \geq 1$, where $\nu(G)$ denotes the
induced matching number of $G$, \cite{selvi_ha}. In \cite{jayanthan},
the authors along with Narayanan proved that for a bipartite graph
$G$, $\reg(I(G)^q) \leq 2q + \cochord(G) - 1$ for all $q \geq 1$,
where $\cochord(G)$ denote the co-chordal cover number of $G$. 
There is no general upper bound known for powers of edge ideals of
arbitrary graphs. Therefore, one may ask:
\begin{enumerate}
	\item[Q1.] Does there exists a function $\rho: \{\text{ finite simple
		graphs }\} \rightarrow \mathbb{N}$ such that for any given graph $G$, 
    $\reg(I(G)^q) \leq 2q + \rho(G)$ for all $q \geq 1$.
 \item[Q2.] Can one obtain the linear polynomial corresponding to
   $\reg(I(G)^q)$ for various classes of graphs?
\end{enumerate}
This paper evolves around these two questions.

The first main result of the paper answers Question Q1. 
We prove that if the numerical function $\rho$ satisfies certain
properties, then an upper bound as in Q1 is true:

We first fix a notation that we consider throughout this paper.
Let $G$ be a graph and  
$\mathcal{I}_G$ be the set of all non-empty induced subgraphs of $G$.
\vskip 1mm \noindent
\textbf{Theorem \ref{main-result}.}
{\em
Let $G$  be a graph and  $\rho: \mathcal{I}_G \longrightarrow 
\mathbb{N}$ be a function such that for any $L \in \mathcal{I}_G$,
\begin{enumerate}
 \item $\reg(I(L)) \leq \rho(L)+1$,
 \item $\rho(L_1) \leq \rho(L)$ for any induced subgraph $L_1$ of $L$
   and
 \item there exists a vertex $x \in V(L)$ such that $\rho(L \setminus N_L[x])+1 \leq \rho(L)$.
\end{enumerate}
Then }
\[
 \reg(I(G)^q) \leq 2q+\rho(G)-1 ~\text{ for all $q \geq 1$.}
\]

As an application of this result, we obtain upper
bounds in terms of certain specific combinatorial invariants.
H\`a and Woodroofe
\cite{HaWood} defined an invariant in terms of star
packing, denoted by $\zeta(G)$ (see Section \ref{reg-pow} for the
definition), and proved that $\reg(I(G)) \leq \zeta(G) + 1$.  Also,
Woodroofe proved that $\reg(I(G)) \leq \cochord(G) + 1$. Alilooee, Banerjee,
Beyarslan and H\'{a}  conjectured
(\cite[Conjecture 7.11(1)]{BBH17}):

\begin{conj}\label{cochord-conj}
Let $G$ be a graph. Then for all $q \geq 1$, $\reg(I(G)^q) \leq
2q+\cochord(G) -1$.
\end{conj}

In this paper, we prove Conjecture \ref{cochord-conj}. We also extend the result
by H\`a and Woodroofe  to all powers.

\vskip 2mm \noindent
\textbf{Theorem \ref{mainresult}.}
{\em 
Let $G$  be a graph.  Then for all $q \geq 1$,
\begin{enumerate}
 \item $\reg(I(G)^q) \leq 2q+\zeta(G)-1.$
 \item $\reg(I(G)^q) \leq 2q+\cochord(G)-1$.
\end{enumerate}
}
Another way of bounding the function $\reg(I(G)^q)$, than using
combinatorial invariants, is to relate it to the regularity of $G$
itself. It was conjectured by Alilooee, Banerjee, Beyarslan and H\`a,
\cite[Conjecture 7.11(2)]{BBH17}: 

\begin{conj}\label{ABBH-conj}
If $G$ is a graph, then for all $q \geq 1$,
$\reg(I(G)^q) \leq 2q + \reg(I(G)) - 2.$
\end{conj}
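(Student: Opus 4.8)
The plan is to argue by induction on $q$, using Banerjee's recursive machinery for colon ideals of powers of edge ideals. The base case $q=1$ is the trivial identity $\reg(I(G))=\reg(I(G))$. For the inductive step I would invoke Banerjee's inequality, which states that for $I=I(G)$,
$$\reg(I^{q+1}) \leq \max\Big\{ \reg(I^q),\ \max_{M}\big( \reg(I^{q+1}:M)+2q \big) \Big\},$$
where $M$ ranges over the minimal monomial generators of $I^q$, each of degree $2q$. The shift $+2q$ is exactly $\deg(M)$, coming from the short exact sequences that build $I^{q+1}=I\cdot I^q$ out of its colons.

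First I would dispose of the term $\reg(I^q)$: by the inductive hypothesis $\reg(I^q)\leq 2q+\reg(I(G))-2\leq 2(q+1)+\reg(I(G))-2$, so this term is harmless. The whole problem therefore collapses onto the colon terms, and it suffices to prove, for every minimal generator $M=e_1\cdots e_q$ of $I^q$, the single inequality
$$\reg\big( I(G)^{q+1}:M \big) \leq \reg(I(G)).$$
Indeed, if this holds then $\reg(I^{q+1}:M)+2q\leq 2q+\reg(I(G))=2(q+1)+\reg(I(G))-2$, which closes the induction.

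The next step is to make the colon ideal combinatorial. Here I would use Banerjee's structural theorem: $(I(G)^{q+1}:M)$ is again an edge ideal, namely $I(G')$, where $G'$ is obtained from $G$ by adjoining every edge $\{u,v\}$ for which $u$ and $v$ are even-connected with respect to $M$. In particular $G$ is a subgraph of $G'$ on the same vertex set, so the target inequality becomes the purely graph-theoretic assertion that $\reg(I(G'))\leq \reg(I(G))$; that is, the even-connection edge additions must not increase the regularity.

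This last assertion is the crux and the main obstacle. Regularity of edge ideals is \emph{not} monotone under adding edges, so there is no soft route to $\reg(I(G'))\leq\reg(I(G))$; the very specific combinatorial structure of even-connection has to be exploited. The strategy I would pursue is to add the new edges one at a time and, at each stage, to locate a vertex whose closed neighborhood in the enlarged graph dominates another's, so that a splitting/deletion short exact sequence, combined with the stability of regularity under removing a dominated vertex, prevents the regularity from rising. I expect this step to resist a fully general argument—it is precisely this monotonicity-type statement that seems out of reach for arbitrary $G$. That is why one is led either to settle for a weaker bound, replacing $\reg(I(G))-1$ by a larger combinatorial invariant such as $\zeta(G)$, or to restrict to a class like vertex decomposable graphs, where even-connection is compatible with the decomposition and the inequality $\reg(I(G'))\leq\reg(I(G))$ can be verified directly through the induced structure on links and deletions.
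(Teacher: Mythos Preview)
Your outline correctly reproduces the reduction that the paper itself uses, but you should note that the statement you were asked to prove is a \emph{conjecture} in the paper and is \emph{not} proved there in full generality. The paper establishes it only under an extra hypothesis (Theorem~\ref{weak-conj1}) and for vertex decomposable graphs (Theorem~\ref{upper-ver}); the general case remains open. So your final paragraph, where you say the key inequality $\reg(I(G'))\le\reg(I(G))$ ``seems out of reach for arbitrary $G$'' and that one is led either to the weaker $\zeta(G)$ bound or to a restricted class, is not a defect of your argument but an accurate description of the current state of affairs.

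Two technical remarks on the reduction itself. First, $(I(G)^{q+1}:M)$ is \emph{not} literally an edge ideal: Theorem~\ref{even_connec_equivalent} allows $u=v$, so squares $u^2$ may appear among the minimal generators. The paper passes to the polarization $\widetilde{(I(G)^{q+1}:M)}$ (Corollary~\ref{pol_reg}) before reading off a graph $G'$; your sentence ``$G'$ is obtained from $G$ by adjoining every edge $\{u,v\}$'' elides this and should be corrected. Second, your proposed mechanism for controlling $\reg(I(G'))$---adding even-connection edges one at a time and finding dominated vertices---is not how the paper proceeds in the cases it does handle. Instead, the paper works with short exact sequences at carefully chosen vertices of $G'$ (neighbors of an endpoint of some $e_i$), and relies on a suite of structural lemmas (Lemmas~\ref{even_obs2}--\ref{even-lemma}) showing that the relevant induced subgraphs of $G'$ embed into graphs of the form $(G\setminus N_G[\,\cdot\,])'$ built from fewer edges, so that a double induction on $s$ and $|V(G)|$ applies. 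That machinery is what lets the argument go through when one has, for every induced subgraph, a vertex $x$ with $\reg(I(H\setminus N_H[x]))+1\le\reg(I(H))$; absent such a vertex, the induction stalls, and this is precisely the content of the open Question following Theorem~\ref{weak-conj1}.
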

There are some classes of graphs for which this conjecture is known to
be true, see \cite{BBH17, BN19}. As a consequence of the techniques that we have
developed, we prove the conjecture with an additional hypothesis:
\vskip 2mm \noindent
\textbf{Corollary \ref{weak-conj1}.} {\em
Let $G$ be a graph. If every induced subgraph $H$ of 
$G$ has a vertex $x$ with $\reg(I(H \setminus N_H[x]))+1 \leq
\reg(I(H))$, then for all $q \geq 1$,}
\[
 \reg(I(G)^q) \leq 2q+\reg(I(G))-2.
\]

We  recover many of the known results on the regularity of powers of
edge ideals of graphs (Corollary \ref{known-results}).  
Also, as a
consequence of our results we answer Q2 by obtaining precise
expressions for the regularity of powers of edge ideals of some
classes of graphs, 
(Proposition \ref{mat-reg}, Proposition \ref{disjoint}).

So far, in the literature, for the
classes of graphs for which the regularity of powers of edge ideals
have been computed, they satisfy either $\reg(I(G)^q) = 2q + \nu(G) -
1$ or $\reg(I(G)^q) = 2q + \cochord(G) - 1$, for all $q \geq 2$. In \cite{jayanthan}, the
authors raised the question whether there exists a graph $G$ with
\[2q + \nu(G) - 1 < \reg(I(G)^q) < 2q + \cochord(G) - 1, \text{ for $q \gg 0$.}\] As a
consequence of our investigation, we obtain a class of graphs for which
\[2q + \nu(G) - 1 < \reg(I(G)^q) = 2q + \zeta(G) - 1 < 2q +
  \cochord(G) - 1, \text{ for $q \gg 0$.}\] 

We then proceed to prove the Conjecture \ref{ABBH-conj} for  vertex decomposable
graphs.
A graph $G$ is said to be vertex decomposable if $\Delta(G)$ is vertex decomposable, 
where $\Delta(G)$ denotes the independence complex of $G$ 
(see Section \ref{reg-vertex} for the definition).
Vertex decomposability of simplicial complexes was first introduced by Provan and Billera \cite{ProvLouis},
in the case when all the maximal faces are of the same cardinality, and
extended to the arbitrary case by Bj\"orner and Wachs \cite{BjWachs}.
We have the chain of implications
\begin{equation*}
 \text{vertex decomposable}\Longrightarrow \text{shellable} \Longrightarrow \text{sequentially
 Cohen-Macaulay.}
\end{equation*} 
A graph $G$ is said to be  shellable if $\Delta(G)$ is a shellable
simplicial complex and $G$ is sequentially Cohen-Macaulay if $R/I(G)$
is sequentially Cohen-Macaulay.
Both the above implications are known to be strict.
Recently, a number of authors have
been interested in classifying or identifying vertex decomposable graphs $G$ in
terms of the combinatorial properties of $G$, see \cite{BFH15, BC, khosh_moradi, adam, Wood2, Russ11}. 
\vskip 2mm \noindent
\textbf{Theorem \ref{scm-bipartite}.}
{\em If $G$ is a vertex decomposable graph, then for all $q \geq 1$,
} 
\[
 \reg(I(G)^q) \leq 2q+\reg(I(G))-2.
\]
\vskip 2mm 
Banerjee, Beyarslan and H\`a gave a question whether the equality $\reg(I(G)^q) = 2q +
\nu(G) - 1$ hold for all $q \geq 1$ for several important classes
of graphs, \cite[Question 7.9]{BBH17}. We answer this question
affirmatively in Corollary \ref{main-cor}.

%
%

Our paper is organized as follows. In Section \ref{pre}, we collect the terminology
and preliminary results that are essential for the rest of the paper. 
We prove, in Section \ref{tech}, several technical lemmas which are
needed for the proof of our main results which appear in Sections \ref{reg-pow} and
\ref{reg-vertex}.
%
%
%

\section{Notation and preliminaries}\label{pre}

Throughout this article, $G$ denotes a finite simple graph without isolated vertices. 
For a graph $G$, let $V(G)$ and $E(G)$ denote the set of all
vertices and the set of all edges of $G$, respectively.
The \textit{degree} of a vertex $x \in V(G),$ denoted by 
$\deg_G(x),$ is the number of edges incident to $x.$
A subgraph $H \subseteq G$  is called \textit{induced} if for $u, v
\in V(H)$, $\{u,v\} \in
E(H)$ if and only if $\{u,v\} \in E(G)$.
For $\{u_1,\ldots,u_r\}  \subseteq V(G)$, let $N_G(u_1,\ldots,u_r) = \{v \in V (G)\mid \{u_i, v\} \in E(G)~ 
\text{for some $1 \leq i \leq r$}\}$ be the set of neighbors of $u_1,\ldots,u_r$
and $N_G[u_1,\ldots,u_r]= N_G(u_1,\ldots,u_r) \cup \{u_1,\ldots,u_r\}$. 
For $U \subseteq V(G)$, we denote by $G \setminus U$ 
the induced subgraph of $G$ on the vertex set $V(G) \setminus U$.
Let $C_n$ denote the cycle on $n$ vertices.

A subset $X$ of $V(G)$ is called \textit{independent} if there is no edge $\{x,y\} \in E(G)$ 
for $x, y \in X$. 
A \textit{matching} in a graph $G$ is a subgraph consisting of pairwise disjoint edges. 
If a collection of pairwise disjoint edges is an induced subgraph,
then the matching is said to be an \textit{induced matching}. The largest size of an induced matching in $G$ is called its induced 
matching number and denoted by $\nu(G)$. 

One important tool in the study of regularity of powers of edge ideals
is even-connections.
We recall the concept of \textit{even-connectedness} from \cite{banerjee}. 

\begin{definition}\label{even_connected} Let $G$ be a graph. Two vertices $u$ and $v$ ($u$ may be the same as $v$) are said to be even-connected with respect to an $s$-fold 
products $e_1\cdots e_s$, where $e_1,\ldots,e_s$ are edges of $G$, not necessarily distinct,
if there is a path $p_0p_1\cdots p_{2k+1}$, $k\geq 1$ in $G$ such that:
\begin{enumerate}
 \item $p_0=u,p_{2k+1}=v.$
 \item For all $0 \leq l \leq k-1,$ $p_{2l+1}p_{2l+2}=e_i$ for some $i$.
 \item For all $i$, $ \mid\{l \geq 0 \mid p_{2l+1}p_{2l+2}=e_i \}\mid
   ~ \leq  ~ \mid \{j \mid e_j=e_i\} \mid$.
 \item For all $0 \leq r \leq 2k$, $p_rp_{r+1}$ is an edge in $G$.
\end{enumerate}
\end{definition}

\begin{remark} 
While we understand that the
definition of even-connection requires $k \geq 1$, for convenience
of writing the proofs, we consider an edge to be trivially
even-connected, i.e., we take the even-connection by setting $k = 0$
in the above definition.
\end{remark}
The following theorem due to Banerjee is used repeatedly throughout this paper:
\begin{theorem}\label{even_connec_equivalent}\cite[Theorem 6.1 and Theorem 6.7]{banerjee} Let $G$ be a graph with edge ideal
$I = I(G)$, and let $s \geq 1$ be an integer. Let $M$ be a minimal generator of $I^s$.
Then $(I^{s+1} : M)$ is minimally generated by monomials of degree 2, and $uv$ ($u$ and $v$ may
be the same) is a minimal generator of $(I^{s+1} : M )$ if and only if either $\{u, v\} \in E(G) $ or $u$ and $v$ are even-connected with respect to $M$.
 \end{theorem}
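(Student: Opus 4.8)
The plan is to treat $(I^{s+1}:M)$ as the monomial ideal consisting of those monomials $w$ with $wM \in I^{s+1}$, and to prove two things simultaneously: that its minimal generators have degree exactly $2$, and that they are precisely the edges and the even-connected pairs. Write $M = e_1 \cdots e_s$. First I would dispose of low degrees by a degree count: a single variable $x$ gives a monomial $xM$ of degree $2s+1$, strictly below the degree $2s+2$ of any minimal generator of $I^{s+1}$, so there are no degree-$1$ generators and every generator has degree $\ge 2$.

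Next comes the easy ``if'' inclusion, that every edge and every even-connected pair lies in the colon. For an edge $\{u,v\}$ this is immediate, since $uv \cdot M \in I \cdot I^s = I^{s+1}$. For $u,v$ even-connected along a path $p_0 p_1 \cdots p_{2k+1}$, I would regroup the product: by conditions (2) and (3) the multiset of odd edges $\{p_{2l+1}p_{2l+2} : 0 \le l \le k-1\}$ is a sub-multiset of $\{e_1,\dots,e_s\}$, so $M = \bigl[\prod_{l=0}^{k-1}(p_{2l+1}p_{2l+2})\bigr] \cdot M'$ with $M'$ a product of $s-k$ edges. Then, as monomials, $uv \cdot M = \bigl[\prod_{l=0}^{k}(p_{2l}p_{2l+1})\bigr] \cdot M'$, and by condition (4) each $p_{2l}p_{2l+1}$ is an edge of $G$; this exhibits $uvM$ as a product of $(k+1)+(s-k)=s+1$ edges, hence $uvM \in I^{s+1}$.

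The crux is the converse: if $w$ is a monomial with $wM \in I^{s+1}$, then $w$ is divisible by an edge or by an even-connected pair. I would argue through an auxiliary two-colored multigraph. Writing $wM = f_1 \cdots f_{s+1} \cdot h$ for edges $f_j$ and a monomial $h$, build the multigraph $H$ on $V(G)$ whose edges are the $e_i$ (colored red) and the $f_j$ (colored blue). Comparing variable multiplicities in the monomial identity gives, at every vertex $p$, $\deg_{\mathrm{blue}}(p) - \deg_{\mathrm{red}}(p) = \operatorname{mult}_w(p) - \operatorname{mult}_h(p)$; summing over $p$, the total blue excess equals $2(s+1)-2s = 2$ and is supported on $\supp(w)$. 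Since $H$ carries exactly one more blue edge than red, a symmetric-difference / alternating-trail argument produces a trail $u=p_0, p_1, \dots, p_{2k+1}=v$ that alternates blue--red--blue--$\cdots$--blue, begins and ends at vertices of positive blue excess (so $u,v \mid w$), uses each red edge within its multiplicity in $M$, and has every consecutive pair an edge of $G$. Read as a walk in $G$, this is exactly an even-connection when $k \ge 1$, or a single edge $\{u,v\}$ when $k=0$. Consequently every element of the colon is divisible by a degree-$2$ member of the colon, which forces the minimal generators to have degree $2$ and to be exactly the edges and even-connected pairs, since two distinct degree-$2$ monomials never divide one another.

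The main obstacle is this converse, specifically the extraction of the alternating trail: I must guarantee that the odd steps are genuinely among the $e_i$, respecting the multiplicity bound of condition (3), while every step remains an edge of $G$ as in condition (4), and that both endpoints of the trail lie in $\supp(w)$. Handling possible repeated vertices, so that the ``path'' is correctly read as a walk, and the degenerate case $u=v$, which arises from a closed alternating trail and gives $w$ divisible by $v^2$, are the delicate points; the remaining ingredients are the degree count and the direct regrouping above.
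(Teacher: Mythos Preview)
The paper does not prove this theorem; it is quoted without proof from \cite[Theorem 6.1 and Theorem 6.7]{banerjee} and used as a black box throughout. There is therefore no ``paper's own proof'' to compare your proposal against.

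For what it is worth, your outline follows the standard strategy of Banerjee's original argument: the degree count ruling out degree-$1$ generators and the regrouping for the ``if'' direction are exactly right, and the converse via a two-colored multigraph with an alternating-trail extraction is the intended idea. You correctly identify the genuine difficulty: turning the global count ``one more blue edge than red'' into an explicit alternating walk that respects the multiplicity constraint (3), keeps every step an actual edge of $G$, and has both endpoints in $\supp(w)$. Your sketch waves a hand at ``a symmetric-difference / alternating-trail argument'' here; in Banerjee's paper this step is carried out carefully and is where most of the work lies, including the handling of repeated vertices and the degenerate closed-trail case $u=v$. If you intend to supply a self-contained proof rather than cite the result, that is the part you would need to write out in full.
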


Polarization is a process to obtain a squarefree monomial ideal from a given monomial ideal. 
\begin{definition}\label{pol_def}
Let $M=x_1^{a_1}\cdots x_n^{a_n}$ be a monomial in  $R=\K[x_1,\dots,x_n]$. 
Then we define the squarefree monomial $P(M)$ ({\it polarization} of $M$) as 
$$P(M)=x_{11}\cdots x_{1a_1}x_{21}\cdots x_{2a_2}\cdots x_{n1}\cdots x_{na_n}$$ in the polynomial ring 
$R_1=\K[x_{ij} \mid 1\leq i\leq n,1\leq j\leq a_i]$.
If $I=(M_1,\dots,M_q)$ is an ideal in $R$, then the polarization of
$I$, denoted by $\widetilde{I}$, is defined as $\widetilde{I}=(P(M_1),\dots,P(M_q))$.
\end{definition}

Let $G$ be a graph on the vertex set $V(G) = \{x_1, \ldots, x_n\}$ and $I(G) \subset R=\K[x_1, \ldots, x_n]$ denote the
edge ideal of $G$.  For an edge $e = \{x_i, x_j\}$, we consider $e =
x_ix_j$ as an element of the polynomial ring $R$.  Let $M$ be a minimal
monomial generator of $I(G)^s$. Then $M$ can be written as product of
$s$ edges, i.e., $M=e_1 \cdots e_s$, for some edges $e_1, \ldots, e_s$,
not necessarily distinct.
By Theorem \ref{even_connec_equivalent}, $J = (I(G)^{s+1} :
e_1\cdots e_s)$ is a quadratic monomial ideal. If $x_ix_j$ is a minimal
generator of $J$, then $x_i$ and $x_j$ correspond to the vertices
of $G$ which are even-connected with respect to $e_1\cdots e_s$. If 
$x_i = x_j$, then $x_i^2$ is a minimal generator of $J$. We consider
$\tilde{J}$, the polarization of $J$, contained in the ring $R_1 =
\K[x_1, \ldots, x_n, z_1, \ldots, z_n]$ such that if $x_i^2 \in J$, then
$P(x_i^2) = x_iz_i \in \tilde{J}$. By considering $V' = \{x_1, \ldots, x_n, z_1,
\ldots, z_n\}$ as vertices, one can see that $\tilde{J}$ corresponds to
a graph, $G'$, on a vertex set $V(G') \subseteq V'$. First note that
$I(G) \subset J \subset \tilde{J}$, by considering all these ideals in
$R_1$. Consequently, we can consider $G$ as a subgraph of $G'$.
Note that $G$ may not be an induced subgraph of $G'$.

For example, let $G = C_5$ and $I(G)=(x_1x_2,x_2x_3, x_3x_4,x_4x_5,x_5x_1) \subset
\K[x_1,\ldots,x_5]$. Let $M=x_2x_3x_4x_5$ be a minimal monomial generator
of $I(G)^2$.
Then $(I(G)^3 : M) = I(G) + (x_1^2, x_1x_3,x_1x_4)$. 
Therefore,
$\widetilde{(I(G)^3 : M)} \subset \K[x_1,\ldots,x_5, z_1,\ldots z_5]$ is given by 
$\widetilde{(I(G)^3 : M)} =
I(G) + (x_1z_1,x_1x_3,x_1x_4)$.
Let $G'$ be the graph associated to 
$\widetilde{(I(G)^3 : M)}$. 
Then $V(G')=V(G) \cup \{z_1\}$ and $E(G')=E(G) \cup \{\{x_1,z_1\},\{x_1,x_3\}, \{x_1,x_4\}\}$.
Note also that $N_{G}(x_1)=\{x_2,x_5\}$ and 
$N_{G'}(x_1)=\{x_2,x_3,x_4, x_5,z_1\}$. 
 
For details of polarization we refer the reader to \cite{Herzog'sBook}.
In this paper, we repeatedly use one of its important properties, namely:
\begin{corollary} \cite[Corollary 1.6.3(a)]{Herzog'sBook}\label{pol_reg} Let $I$ be a monomial 
ideal in $\K[x_1, \ldots, x_n].$ Then
$\reg(I)=\reg(\widetilde{I}).$
\end{corollary}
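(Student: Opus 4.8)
The plan is to reduce the statement to the standard fact that Castelnuovo--Mumford regularity is unchanged when one quotients a finitely generated graded module by a linear non-zerodivisor, and to exhibit $R/I$ as precisely such a quotient of $S/\widetilde{I}$, where $S \supseteq R$ is the enlarged polynomial ring in which $\widetilde{I}$ lives.

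First I would reduce to a single \emph{stretch}. Recall that $\widetilde{I}$ is obtained from $I$ by a finite chain of steps, each of which picks a variable occurring to a power $a \geq 2$ in some minimal generator --- say $x_n$ --- adjoins one fresh variable $y$, passes to $S' = R[y]$, and replaces the top power $x_n^{a}$ wherever it occurs by $x_n^{a-1}y$; after finitely many steps the ideal has become squarefree and equals $\widetilde{I}$. So it suffices to prove $\reg(R/I) = \reg(S'/I^{*})$ for one such step (from $I$ to $I^{*}$) and then iterate. Here I would check two facts. First, the substitution homomorphism $\varphi \colon S' = R[y] \to R$ with $y \mapsto x_n$ has kernel $(\ell)$, where $\ell := y - x_n$, and carries $I^{*}$ onto $I$ (since $\varphi(x_n^{a-1}y) = x_n^{a}$), so that $S'/(I^{*} + (\ell)) \cong R/I$ as graded rings. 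Second, $\ell$ is a non-zerodivisor on $S'/I^{*}$. Granting these, the short exact sequence $0 \to (S'/I^{*})(-1) \xrightarrow{\ell} S'/I^{*} \to S'/(I^{*} + (\ell)) \to 0$ forces $\reg(S'/I^{*}) = \reg(S'/(I^{*} + (\ell))) = \reg(R/I)$; iterating over the whole chain gives $\reg(S/\widetilde{I}) = \reg(R/I)$, and the identity relating the regularity of a proper homogeneous ideal to that of its quotient ring (they differ by $1$) then upgrades this to $\reg(\widetilde{I}) = \reg(I)$.

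The hard part will be showing that $\ell = y - x_n$ is a non-zerodivisor on $S'/I^{*}$. Since $I^{*}$ is a monomial ideal, every associated prime of $S'/I^{*}$ has the form $P = (x_j : j \in T)$, and $\ell \in P$ would force both $x_n \in P$ and $y \in P$; so it is enough to show that no associated prime of $S'/I^{*}$ contains $x_n$ and $y$ at once. The structural point that makes this go through is that $a - 1 \geq 1$, so \emph{every} minimal generator of $I^{*}$ divisible by $y$ is also divisible by $x_n$; reading the associated primes off the irreducible decomposition of $I^{*}$, any component involving both $x_n$ and $y$ could be shrunk by discarding $y$, contradicting minimality. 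If this bookkeeping becomes unpleasant, a clean alternative is to sidestep regularity and instead prove that polarization preserves all graded Betti numbers, $\beta_{i,j}^{R}(R/I) = \beta_{i,j}^{S}(S/\widetilde{I})$ for every $i$ and $j$ --- for instance by an induction on stretches using the same short exact sequence applied to $\Tor$, or by comparing Taylor complexes --- and then invoke $\reg(M) = \max\{\, j - i : \beta_{i,j}(M) \neq 0 \,\}$. Either route ultimately rests on the same deformation-theoretic fact: $S/\widetilde{I}$ degenerates to $R/I$ along a regular sequence of linear forms.
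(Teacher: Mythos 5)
Your overall route is the standard one: the paper itself gives no proof of this statement (it is quoted from Herzog--Hibi, Corollary 1.6.3(a)), and the textbook argument behind that citation is exactly your plan --- realize the polarization one ``stretch'' at a time, show the linear form $\ell=y-x_n$ is a non-zerodivisor on $S'/I^{*}$ with $S'/(I^{*}+(\ell))\cong R/I$, and conclude via the short exact sequence that regularity (indeed all graded Betti numbers) is unchanged. The gap is in the one step you yourself flag as the hard part. The structural property you isolate --- ``every minimal generator of $I^{*}$ divisible by $y$ is also divisible by $x_n$'' --- is not sufficient to rule out an associated prime containing both $x_n$ and $y$. For example, $J=(x y,\,x^{2})$ satisfies that property, yet $(J:x)=(x,y)$ is an associated prime and $y-x$ is a zerodivisor, since $(y-x)\cdot x=xy-x^{2}\in J$ while $x\notin J$. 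Likewise, the phrase ``any component involving both $x_n$ and $y$ could be shrunk by discarding $y$, contradicting minimality'' is not an argument: irredundance of an irreducible decomposition is about omitting whole components, not shrinking one, and whether the shrunk component still contains $I^{*}$ is precisely the point at issue (it fails for $(xy,x^{2})$).

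What actually makes the lemma true is the finer bookkeeping you deferred: in the stretch you must split off the \emph{maximal} $x_n$-power $a$, so that every $y$-divisible generator of $I^{*}$ is divisible by $x_n^{a-1}y$ and every $y$-free generator has $x_n$-degree at most $a-1$. This matters: if you split a non-maximal power, the claim is false --- for $I=(x^{2}w,\,x^{3}z)$, splitting only $x^{2}w$ gives $I^{*}=(xyw,\,x^{3}z)$, and $m=x^{2}zw\notin I^{*}$ satisfies $x m\in I^{*}$ and $y m\in I^{*}$, so $y-x$ is a zerodivisor. With the maximal-power convention the proof is a short divisibility check: if $m\notin I^{*}$ is a monomial with $x_n m\in I^{*}$ and $y m\in I^{*}$, then $y m\in I^{*}$ forces $x_n^{a-1}h_i\mid m$ for some generator $x_n^{a}h_i$ of $I$ of top $x_n$-degree; if the generator witnessing $x_n m\in I^{*}$ involves $y$ then $y\mid m$ and hence $x_n^{a-1}yh_i\mid m$, while if it is a $y$-free generator $u_j$ then $\deg_{x_n}u_j\le a-1\le\deg_{x_n}m$ gives $u_j\mid m$; either way $m\in I^{*}$, a contradiction. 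Once this lemma is in place, the rest of your proposal (the isomorphism $S'/(I^{*}+(\ell))\cong R/I$, the exact-sequence step, the iteration, and passing between $\reg(I)$ and $\reg(R/I)$) is correct.
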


\section{Technical lemmas}\label{tech}

In this section, we prove several technical results concerning the
graph associated with $(\widetilde{I(G)^{s+1} : e_1 \cdots e_s})$ and
some of its induced subgraphs. We begin by fixing the notation for the
most of our results. 

\begin{notation}\label{esetup}
Let $G$ be a graph with $V(G)=\{x_1,\ldots,x_n\}$ and $e_1,\ldots,e_s$, $s \geq 1,$ be some edges of $G$ which are not
necessarily distinct. 
By Theorem \ref{even_connec_equivalent}, $\widetilde{(I(G)^{s+1}:e_1 \cdots e_s)}$ is a
quadratic squarefree monomial ideal in an appropriate polynomial ring. We denote by 
$G'$  the graph associated to $\widetilde{(I(G)^{s+1}:e_1 \cdots e_s)}$.
\end{notation}

One of the key ingredients in the proof of the main results is a new
graph, $G'$, obtained from a given graph $G$ as in Notation \ref{esetup}.
Our main aim in this section is to get an upper
bound for regularity of certain induced subgraphs of $G'$ which in
turn will help us in bounding $\reg(I(G'))$. For this purpose, we need
to understand the structure of the graph $G'$ in more detail. First we
show that whiskers can be ignored when taking even-connections.
\begin{lemma}\label{even_obs2}
Let $G$ be a graph and $e_1,\ldots,e_s \in E(G)$ where $s \geq 1$.
Assume that for some $1 \leq i \leq s$, $e_i=\{x,y\}$ with 
$N_G(x)=\{y\}$. Then
 $$(I(G)^{s+1}:e_1 \cdots e_s)=(I(G)^s:\prod_{j\neq i}e_j).$$
\end{lemma}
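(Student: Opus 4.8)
The inclusion $(I(G)^s:\prod_{j\neq i}e_j)\subseteq(I(G)^{s+1}:e_1\cdots e_s)$ is immediate and uses nothing about $x$: if $f\prod_{j\neq i}e_j\in I(G)^s$, then $f\,e_1\cdots e_s=e_i\cdot\bigl(f\prod_{j\neq i}e_j\bigr)\in I(G)\cdot I(G)^s\subseteq I(G)^{s+1}$, so $f$ lies in the right-hand colon ideal. The content of the lemma is the reverse inclusion, and the plan is to reduce it --- via Banerjee's description of these colon ideals --- to a statement about even-connecting paths, which is then settled by a surgery argument exploiting that $x$ is a leaf.

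Write $M=e_1\cdots e_s$ and $M'=\prod_{j\neq i}e_j$, so $M=e_iM'$ with $e_i=\{x,y\}$. Both $M$ and $M'$ are products of edges of $G$, and $M$ is a minimal generator of $I(G)^s$. By Theorem \ref{even_connec_equivalent}, $(I(G)^{s+1}:M)$ is minimally generated in degree two, by the edges of $G$ together with the pairs $uv$ even-connected with respect to $M$. Likewise, when $s\ge 2$, $M'$ is a minimal generator of $I(G)^{s-1}$ and Theorem \ref{even_connec_equivalent} gives the analogous description of $(I(G)^s:M')$ (with even-connections taken with respect to $M'$), while for $s=1$ we have $M'=1$ and $(I(G)^s:M')=I(G)$ is generated by the edges of $G$. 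Since the edges of $G$ appear in both generating sets, and since every even-connection with respect to $M'$ is also one with respect to $M$ (the multiset of edge-factors of $M'$ being contained in that of $M$), it suffices to prove: \emph{if $u$ and $v$ are even-connected with respect to $M$, then either $\{u,v\}\in E(G)$, or $u$ and $v$ are even-connected with respect to $M'$.}

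To this end, assume $u,v$ are even-connected with respect to $M$ but $\{u,v\}\notin E(G)$, and set $c=|\{\,j\neq i:e_j=\{x,y\}\,\}|$, the multiplicity of the edge $\{x,y\}$ among the factors of $M'$. In a path $p_0p_1\cdots p_{2k+1}$ witnessing the even-connection, call the edges $p_{2l+1}p_{2l+2}$, $0\le l\le k-1$, its \emph{odd edges}; these are exactly the edges constrained by conditions (2) and (3) of Definition \ref{even_connected}. The key point is that one can choose a witnessing path in which the number of odd edges equal to $\{x,y\}$ is at most $c$. Once this is achieved, conditions (2) and (3) hold for that path verbatim with $M'$ replacing $M$ --- for an odd edge other than $\{x,y\}$ nothing changes, as $e_i=\{x,y\}$, and for $\{x,y\}$ itself the relevant count is now at most $c$, its multiplicity in $M'$ --- so the same path witnesses an even-connection with respect to $M'$. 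To produce such a path, I would argue by induction on the number of odd edges equal to $\{x,y\}$: if that number exceeds $c$, pick an odd edge $p_{2l+1}p_{2l+2}=\{x,y\}$. One of $p_{2l+1},p_{2l+2}$ is $x$, and since $N_G(x)=\{y\}$, condition (4) forces every path-neighbor of that $x$ to be $y$; so locally the path reads $(p_{2l},p_{2l+1},p_{2l+2})=(y,x,y)$ or $(p_{2l+1},p_{2l+2},p_{2l+3})=(y,x,y)$ (the latter case uses $l\le k-1$, so $p_{2l+3}$ exists). Delete the two vertices $p_{2l+1}$ and $p_{2l+2}$: the new consecutive pair $p_{2l},p_{2l+3}$ is joined by an edge of $G$ --- in the first case because $p_{2l}=y$ and $p_{2l+2}p_{2l+3}\in E(G)$, in the second because $p_{2l+3}=y$ and $p_{2l}p_{2l+1}\in E(G)$ --- so we obtain a walk of length $2k-1$ from $u$ to $v$. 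Tracking how the positions shift, this deletion removes exactly one odd edge, the copy of $\{x,y\}$ formerly at position $2l+1$, and leaves every other odd edge at an odd position, so conditions (2)--(4) still hold; hence the shortened walk is again an even-connection with respect to $M$ with one fewer odd edge equal to $\{x,y\}$, and the induction hypothesis finishes. The sole degenerate case is $k=1$, where the shortened walk has length one, that is $\{u,v\}\in E(G)$, contradicting the assumption.

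I expect the main difficulty to lie in the bookkeeping of the deletion step: one has to verify carefully that excising $p_{2l+1}$ and $p_{2l+2}$ relabels positions so that precisely the intended copy of $\{x,y\}$ is lost from among the odd edges while every other odd edge keeps an odd position (so that conditions (2)--(4) are maintained), and that this works uniformly across the boundary cases $l=0$, $l=k-1$, and $k=1$.
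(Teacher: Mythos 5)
Your proposal is correct and follows essentially the same route as the paper: reduce via Banerjee's description (Theorem \ref{even_connec_equivalent}) to a statement about even-connections, then excise an occurrence of $e_i=\{x,y\}$ from the walk using $N_G(x)=\{y\}$ to force the local pattern $y\,x\,y$, obtaining an even-connection with respect to $\prod_{j\neq i}e_j$ (or an edge in the degenerate case). Your version is in fact a bit more careful than the paper's, which only writes out the orientation $p_{2r+2}=x$ (giving $p_{2r+1}=p_{2r+3}$) and leaves the symmetric case and the index bookkeeping implicit.
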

\begin{proof}
By \cite[Lemma 2.10]{Morey}, we have $(I(G)^{s+1} : xy) = I(G)^s$ for
all $s \geq 1$. Hence $(I(G)^{s+1} : e_1\cdots e_s) = ((I(G)^{s+1} :
e_i) : \prod\limits_{j\neq i}e_j) = (I(G)^s : \prod\limits_{j\neq i}e_j)$.
\end{proof}

The following result shows that if a vertex has no intersection with a
set of edges, then removing such a vertex and taking even-connection
with respect to the set of those edges commute with each other.

\begin{lemma}\label{single_vertex} 
We use the notation in Notation \ref{esetup}. 
Let $G$ be a graph and $e_1,\ldots,e_s \in E(G)$ where $s \geq 1$.
If  $x \in V(G)$ satisfy $\{x\} \cap e_i=\emptyset$ for all $1 \leq i \leq s$, then 
$$I(G' \setminus x)=(\widetilde{I(G \setminus x)^{s+1}:e_1 \cdots e_s}).$$ 
\end{lemma}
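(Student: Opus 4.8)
The plan is to reduce the whole statement to a single combinatorial observation about even-connections and then translate back through Theorem \ref{even_connec_equivalent} together with the behaviour of polarization on generators. Write $H = G \setminus x$. Since $x$ is not an endpoint of any $e_i$, each $e_i$ is still an edge of $H$, so the product $e_1 \cdots e_s$ and the ideal $(I(H)^{s+1} : e_1 \cdots e_s)$ make sense; by Theorem \ref{even_connec_equivalent} and Corollary \ref{pol_reg} its polarization is again a quadratic squarefree monomial ideal.

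The heart of the argument is the following claim: for vertices $u, v \in V(G) \setminus \{x\}$ (possibly $u = v$), the monomial $uv$ lies among the generators described by Theorem \ref{even_connec_equivalent} for $G$ if and only if it does so for $H$; equivalently, $\{u,v\} \in E(G)$ iff $\{u,v\} \in E(H)$, and $u,v$ are even-connected with respect to $e_1 \cdots e_s$ in $G$ iff they are so in $H$. The edge statement is immediate since $H$ is an induced subgraph, and the ``if'' direction for even-connection is clear because a path witnessing an even-connection in $H$ is also one in $G$. For the ``only if'' direction I would take a path $p_0 p_1 \cdots p_{2k+1}$ as in Definition \ref{even_connected} with $p_0 = u$, $p_{2k+1} = v$: condition~(2) forces every internal vertex $p_1, p_2, \ldots, p_{2k}$ to be an endpoint of some $e_i$, hence distinct from $x$, while $p_0 = u \neq x$ and $p_{2k+1} = v \neq x$ by hypothesis. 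Thus the entire path, and all of its edges (none incident to $x$), lie in $H$, so conditions~(1)--(4) hold verbatim in $H$.

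Granting the claim, I would finish by bookkeeping on generators. By Theorem \ref{even_connec_equivalent}, $(I(G)^{s+1} : e_1 \cdots e_s)$ is minimally generated by the degree-$2$ monomials $uv$ with $\{u,v\} \in E(G)$ or $u,v$ even-connected in $G$; polarization replaces each occurring square $u^2$ (which appears exactly when $u$ is even-connected to itself) by $u\,u'$ for a fresh variable $u'$, one per such $u$, and leaves the squarefree generators unchanged. Hence $G'$ has edge set $\{\,\{u,v\} : \{u,v\} \in E(G) \text{ or } u,v \text{ even-connected in } G\,\} \cup \{\,\{u,u'\} : u \text{ even-connected to itself in } G\,\}$, and $I(G' \setminus x)$ is generated by precisely those generators of $\widetilde{(I(G)^{s+1}:e_1\cdots e_s)}$ not divisible by $x$. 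Running the identical description for $H$ in place of $G$, using the same choice of fresh variables $u'$ for $u \neq x$, and invoking the claim, the generators of $\widetilde{(I(H)^{s+1} : e_1 \cdots e_s)}$ are exactly those generators of $\widetilde{(I(G)^{s+1}:e_1\cdots e_s)}$ that do not involve $x$. This yields $I(G' \setminus x) = \widetilde{(I(H)^{s+1} : e_1 \cdots e_s)}$.

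The one point requiring care — and the nearest thing to an obstacle — is the polarization bookkeeping in the case that $x$ is itself even-connected to itself: then $x^2$ is a generator of $(I(G)^{s+1}:e_1\cdots e_s)$, so $G'$ contains the whisker neighbour $x'$, and deleting $x$ leaves $x'$ as an isolated vertex of $G' \setminus x$ which does not occur in the graph of $\widetilde{(I(H)^{s+1}:e_1\cdots e_s)}$. Since an isolated vertex contributes nothing to the edge ideal, the equality of ideals is unaffected; I would either remark on this explicitly, or, as in the previous paragraph, phrase the whole argument in terms of monomial generators, where the discrepancy never arises.
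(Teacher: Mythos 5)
Your proof is correct and follows essentially the same route as the paper: the reverse inclusion is immediate, and for the forward inclusion both arguments rest on the observation that condition (2) of Definition \ref{even_connected} forces every internal vertex of an even-connection path to be an endpoint of some $e_i$, hence distinct from $x$, so the path survives in $G\setminus x$. Your extra care with the polarization bookkeeping (the whisker $x'$ when $x$ is even-connected to itself) is a detail the paper leaves implicit, but it does not change the argument.
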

\begin{proof}
Clearly $(\widetilde{I(G \setminus x)^{s+1}:e_1 \cdots e_s}) \subseteq I(G'
\setminus x)$.  
Let $u, v \in V(G \setminus x)$, not necessarily distinct, be such that $u$
is even-connected to $v$ in $G$ with respect to $e_1 \cdots e_s$.
Let $(u=p_0)p_1 \cdots p_{2k}(p_{2k+1}=v)$ be an even-connection
in $G$.  Since $e_i \cap
\{x\}=\emptyset$ for all $1 \leq i \leq s$, $u$ is even-connected to
$v$ in $G \setminus x$ with respect to $e_1 \cdots e_s$. 
\end{proof}

The next two results which throws more light into the structure of
$G'$ have been proved in \cite{JS19}. While in \cite{JS19}, the hypothesis was that the
graph is very well-covered, it may be noted that these two proofs do
not require the hypothesis. We simply recall them here without proofs.

\begin{lemma}\label{even_obs}\cite[Lemma 4.3]{JS19} 
We use the notation in Notation \ref{esetup}.
Suppose $(u=p_0)p_1 \cdots p_{2k}(p_{2k+1}=v)$ is an even-connection in $G$ with respect to
$e_1 \cdots e_s$ for some $k \geq 1$. If $\{w,p_i\} \in E(G')$ for some 
$0 \leq i \leq 2k+1$, then either $\{u,w\} \in E(G')$ or $\{v,w\} \in E(G')$.
 \end{lemma}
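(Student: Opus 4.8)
The plan is to reduce everything to Theorem~\ref{even_connec_equivalent}: in each case I will exhibit an explicit even-connection of $w$ to $u$ or of $w$ to $v$ with respect to $e_1\cdots e_s$ (or a genuine edge of $G$ joining them). Since $P$ itself is such an even-connection, $\{u,v\}\in E(G')$, so we may assume $w\notin\{u,v\}$. Call $p_1p_2,p_3p_4,\dots,p_{2k-1}p_{2k}$ the \emph{connectors} of $P$; by the four conditions of Definition~\ref{even_connected} each connector is one of the $e_j$'s and the product of all of them divides $e_1\cdots e_s$, so the same is true of any sub-multiset of them. There are two cases, according to Theorem~\ref{even_connec_equivalent}.

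\textbf{Case 1: $\{w,p_i\}\in E(G)$.} If $i=0$ or $i=2k+1$ there is nothing to prove. Otherwise I would take the walk $w,p_i,p_{i-1},\dots,p_1,p_0=u$ when $i$ is even and the walk $w,p_i,p_{i+1},\dots,p_{2k+1}=v$ when $i$ is odd. In both cases the walk has odd length at least $3$, each of its consecutive pairs is an edge of $G$ (the new initial edge is $\{w,p_i\}$, the rest are edges of $P$), and the edges occupying its connector positions form a contiguous block among the connectors of $P$; hence the four conditions of Definition~\ref{even_connected} hold with respect to $e_1\cdots e_s$ and we obtain $\{u,w\}\in E(G')$ (respectively $\{v,w\}\in E(G')$).

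\textbf{Case 2: $\{w,p_i\}\notin E(G)$.} Then $w$ is even-connected to $p_i$ with respect to $e_1\cdots e_s$, say along $R\colon w=r_0,r_1,\dots,r_{2m+1}=p_i$. I would glue $R$ to the appropriate half of $P$, appending $p_{i+1},\dots,p_{2k+1}$ if $i$ is odd and $p_{i-1},\dots,p_0$ if $i$ is even, obtaining a walk from $w$ to $v$ (resp. to $u$). One checks that this walk has odd length at least $3$, that its consecutive pairs are all edges of $G$, and that its bridge/connector pattern lines up correctly across the seam — the edge $r_{2m+1}p_{i\pm1}=p_ip_{i\pm1}$ at the seam sits in a connector position and is one of the $e_j$'s precisely because it is a connector of $P$. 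So the only condition that can fail is the multiplicity bound: the connectors of the new walk are those of $R$ together with a block of the connectors of $P$, and this combined multiset might use some $e_j$ more often than it occurs in $e_1\cdots e_s$. To fix this I would choose $R$ with as few connectors as possible, and if a surplus still occurs, excise the closed sub-walk between two occurrences of a surplus edge, re-routing toward $u$ or toward $v$ as needed to keep the length odd; each excision strictly shortens the walk and only deletes connectors, so after finitely many steps the multiplicity bound holds and Theorem~\ref{even_connec_equivalent} gives $\{u,w\}\in E(G')$ or $\{v,w\}\in E(G')$.

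Case~1 is essentially bookkeeping (keeping the alternation of bridge and connector positions synchronized with the parity of the index). The genuine difficulty, and the place I expect most of the work to go, is Case~2: one must argue that the clean-up of the connector multiset can always be carried out without violating the parity constraint, so that the process terminates at an honest even-connection.
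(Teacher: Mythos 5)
Your reduction to exhibiting an explicit even-connection is the right idea, and your Case 1 (when $\{w,p_i\}$ is an actual edge of $G$) is correct: the reversed prefix, respectively the tail, of $P$ prefixed by the edge $\{w,p_i\}$ uses a sub-multiset of the connectors of $P$, so all four conditions of Definition \ref{even_connected} hold. (The paper does not even separate this case: by the convention that an edge is a trivial even-connection, it is absorbed into the general gluing.) Your bookkeeping that, after gluing $R$ to the appropriate half of $P$, conditions (1), (2) and (4) hold and only the multiplicity condition (3) is at risk is also accurate.

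The gap is exactly where you say you expect the work to be, in Case 2, and as written that step is not a proof. Your clean-up procedure is unsubstantiated on three counts: (i) if the two occurrences of a surplus edge $e=\{a,b\}$ are traversed in opposite orientations, the naive excision destroys the alternating bridge/connector pattern (the parity of all positions after the cut flips), and the ``re-routing toward $u$ or toward $v$'' needed to repair it forces you into the other half of $P$, whose connectors you have not controlled, so new multiplicity violations can appear; (ii) the re-routed walk need not be shorter than the one you started from, so ``each excision strictly shortens the walk'' does not give termination; (iii) ``only deletes connectors'' is false in general, since bridges in the excised stretch disappear as well and, in the opposite-orientation case, connector and bridge positions exchange roles. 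This surgery is precisely the content of \cite[Lemma 6.13]{banerjee}, which is how the paper handles it: it splits according to whether some connector of $R$ shares a vertex with some connector of the used half of $P$. If there is no common vertex, then no $e_j$ can occur as a connector in both pieces (two connector occurrences of the same edge share both its vertices), so condition (3) holds for the glued walk automatically and no surgery is needed; if there is a common vertex, Banerjee's lemma directly yields that $w$ is even-connected to $u$ or to $v$. To complete your argument you must either invoke that lemma at this point or carry out in full the case analysis it encapsulates (orientation of the repeated edge, choice of which endpoint to re-route toward, and a correct termination measure); none of this is supplied in the proposal.
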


 \begin{lemma}\label{tech_lemma}\cite[Lemma 4.5]{JS19} Let the notation be as in Notation \ref{esetup}.
Let $y \in V(G)$ and $H=G \setminus N_G[y]$. 
If $\{e_1,\ldots,e_s\} \cap E(H) = \{e_{i_1},\ldots,e_{i_t}\}$ and $H'$ is the 
graph associated to $\widetilde{(I(H)^{t+1}:e_{i_1} \cdots e_{i_t})}$, then 
$G' \setminus N_{G'}[y]$ is an induced subgraph of $H'$.
In particular, $$\reg(I(G' \setminus N_{G'}[y])) \leq \reg(I(H')).$$
\end{lemma}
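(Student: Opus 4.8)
The plan is to show the vertex-set containment $V(G' \setminus N_{G'}[y]) \subseteq V(H')$ together with compatibility of edges, so that $G' \setminus N_{G'}[y]$ sits inside $H'$ as an induced subgraph; the regularity inequality then follows because regularity of edge ideals does not increase when passing to induced subgraphs. First I would unwind the definitions: a vertex of $G' \setminus N_{G'}[y]$ is a vertex $z$ of $G'$ (hence of $G$, up to the polarization bookkeeping) that is neither $y$ nor $G'$-adjacent to $y$. I need to check that such $z$ survives into $H = G \setminus N_G[y]$, i.e. that $z \notin N_G[y]$; this is immediate since $N_G[y] \subseteq N_{G'}[y]$ (edges of $G$ are edges of $G'$). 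The polarization variables attached to repeated edges must also be tracked, but since $\{e_1,\dots,e_s\}\cap E(H)=\{e_{i_1},\dots,e_{i_t}\}$, exactly the polarization variables coming from edges that lie in $H$ reappear in the construction of $H'$, so the ambient ring of $H'$ contains the relevant variables.

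The core of the argument is the edge comparison. Suppose $\{u,v\}\in E(G' \setminus N_{G'}[y])$, so $u,v \notin N_{G'}[y]$ and either $\{u,v\}\in E(G)$ or $u,v$ are even-connected with respect to $e_1\cdots e_s$ via some path $p_0 p_1 \cdots p_{2k+1}$. In the first case $\{u,v\}\in E(H)$ since both endpoints avoid $N_G[y]$, and we are done. In the even-connected case, the key claim is that the whole path can be taken to avoid $N_G[y]$, and moreover to use only edges from $\{e_{i_1},\dots,e_{i_t}\}$; then $u,v$ are even-connected in $H$ with respect to $e_{i_1}\cdots e_{i_t}$, giving $\{u,v\}\in E(H')$. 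To see the path avoids $N_G[y]$: if some internal vertex $p_i$ were in $N_{G'}[y]$, i.e. $\{p_i,y\}\in E(G')$ (or $p_i = y$), then by Lemma \ref{even_obs} applied to $w=y$ we would get $\{u,y\}\in E(G')$ or $\{v,y\}\in E(G')$, contradicting $u,v\notin N_{G'}[y]$; and $p_i$ cannot equal $y$ for the same reason. Hence every $p_i\notin N_G[y]$, so every edge $p_r p_{r+1}$ lies in $H$, and in particular each odd-step edge $p_{2l+1}p_{2l+2}=e_j$ is an edge of $H$, so $e_j \in \{e_{i_1},\dots,e_{i_t}\}$. The multiplicity condition (condition (3) of Definition \ref{even_connected}) for the sub-collection $e_{i_1}\cdots e_{i_t}$ is inherited from the same condition for $e_1\cdots e_s$, since only these edges are used along the path. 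This realizes the same path as a valid even-connection in $H$ with respect to $e_{i_1}\cdots e_{i_t}$, so $\{u,v\}\in E(H')$.

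Finally I would argue the containment is \emph{induced}: conversely, if $u,v$ are vertices of $G' \setminus N_{G'}[y]$ with $\{u,v\}\in E(H')$, then either $\{u,v\}\in E(H)\subseteq E(G)\subseteq E(G')$ or $u,v$ are even-connected in $H$ with respect to a sub-collection of the $e_j$, which is a fortiori an even-connection in $G$ with respect to $e_1\cdots e_s$ (all path-edges lie in $H\subseteq G$, and the multiplicity bound only relaxes); hence $\{u,v\}\in E(G')$, and since $u,v\notin N_{G'}[y]$ this edge lies in $G' \setminus N_{G'}[y]$. So $G' \setminus N_{G'}[y]$ is the induced subgraph of $H'$ on the vertex set $V(G')\setminus N_{G'}[y]$, and $\reg(I(G' \setminus N_{G'}[y])) \le \reg(I(H'))$ by the standard fact that edge-ideal regularity is monotone under induced subgraphs.

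\textbf{Main obstacle.} I expect the delicate point to be the bookkeeping around polarization: making sure that ``$z$ is a vertex of $G' \setminus N_{G'}[y]$'' is correctly matched with ``$z$ is a vertex of $H'$'' once the polarization variables for the repeated edges are introduced, and that restricting to the sub-collection $\{e_{i_1},\dots,e_{i_t}\}$ does not lose or add spurious polarization variables among those actually used. The even-connection manipulation itself is routine given Lemma \ref{even_obs}; the care needed is purely in aligning the two polarized polynomial rings and checking the multiplicity condition transfers cleanly to the smaller index set.
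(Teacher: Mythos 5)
Your proposal is correct and takes essentially the same route as the paper: you use Lemma \ref{even_obs} to force any even-connection between two vertices of $G' \setminus N_{G'}[y]$ to avoid $N_{G'}[y]$ entirely, so the path lives in $H$ and uses only the edges $e_{i_1},\ldots,e_{i_t}$, giving $\{u,v\}\in E(H')$, after which induced-ness and the monotonicity of regularity under induced subgraphs finish the argument. You in fact spell out the converse (induced) direction and the transfer of the multiplicity condition more explicitly than the paper's proof does, but the underlying argument is the same.
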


%

In the following results, we show that the even-connections in a
parent graph with respect to edges coming from an induced subgraph,
induces an even-connection in the induced subgraph.
\begin{lemma}\label{ind-reg}
Let $G$ be a graph and $H$ be an induced subgraph of $G$. For $e_1,\ldots,e_s \in E(H)$,
$s \geq 1$, let $H'$ and $G'$ be the graphs associated to $\widetilde{(I(H)^{s+1}:e_1 \cdots e_s)}$
and $\widetilde{(I(G)^{s+1}:e_1 \cdots e_s)}$ respectively. 
Then $H'$ is an induced subgraph of $G'$.
In particular, $$\reg(I(H')) \leq \reg(I(G')).$$
\end{lemma}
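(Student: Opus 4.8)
The statement to prove is Lemma~\ref{ind-reg}: if $H$ is an induced subgraph of $G$ and $e_1,\dots,e_s \in E(H)$, then $H'$ (the graph of $\widetilde{(I(H)^{s+1}:e_1\cdots e_s)}$) is an induced subgraph of $G'$, and consequently $\reg(I(H')) \le \reg(I(G'))$.

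The plan is to argue directly from the even-connectedness characterization in Theorem~\ref{even_connec_equivalent}. First I would take an arbitrary edge $\{u,v\} \in E(H')$. By Theorem~\ref{even_connec_equivalent}, $uv$ is a minimal generator of $(I(H)^{s+1}:e_1\cdots e_s)$, so either $\{u,v\} \in E(H)$ or $u$ and $v$ are even-connected in $H$ with respect to $e_1\cdots e_s$ via some path $p_0p_1\cdots p_{2k+1}$ entirely inside $H$. In either case, since $H$ is a subgraph of $G$ and all the $e_i$ lie in $E(H) \subseteq E(G)$, the same edge or the same path witnesses that $\{u,v\} \in E(G)$ or that $u,v$ are even-connected in $G$ with respect to $e_1\cdots e_s$ (conditions (1)--(4) of Definition~\ref{even_connected} are preserved verbatim). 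Hence $uv \in (I(G)^{s+1}:e_1\cdots e_s)$, so $\{u,v\} \in E(G')$. This shows $E(H') \subseteq E(G')$, i.e. $H'$ is a subgraph of $G'$. Note $V(H') \subseteq V(G')$ because the polarization variables introduced for $H$ sit inside those for $G$ — though one should be a touch careful here about how the ambient polynomial ring for polarization is chosen; the cleanest route is to observe $V(H') \subseteq V(H) \cup (\text{new polarization vertices})$ and that the new vertices match since the generating degrees of the colon ideal are the same.

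Next I would verify that $H'$ is in fact an \emph{induced} subgraph of $G'$, i.e. if $a,b \in V(H')$ and $\{a,b\} \in E(G')$ then $\{a,b\} \in E(H')$. This is the more delicate direction: an edge of $G'$ comes from an even-connection in $G$, and a priori that even-connecting path could wander through vertices of $G$ outside $H$, so it need not directly give an even-connection in $H$. The resolution is to restrict attention to the situation where both endpoints $a,b$ already lie in $V(H')$ — in particular they are vertices appearing in the generators of the $H$-colon ideal — and then to invoke that $H$ is induced in $G$ together with a path-surgery argument in the spirit of Lemma~\ref{even_obs} and \cite[Lemma 6.13]{banerjee}: whenever the even-connecting path in $G$ leaves $H$, one shortens or reroutes it using the chords guaranteed by $H$ being induced, ultimately producing an even-connection within $H$ (or a direct edge of $H$). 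Actually, the argument can likely be streamlined: since the colon ideal $(I(G)^{s+1}:e_1\cdots e_s)$ restricted to the variables of $H$ should coincide with $(I(H)^{s+1}:e_1\cdots e_s)$ when $H$ is induced — because any generator $ab$ with $a,b \in V(H)$ arising in $G$ forces, via induced-ness, an even-connection or edge inside $H$ — we get $E(G') \cap \binom{V(H')}{2} = E(H')$, which is exactly the induced-subgraph condition.

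Finally, the regularity inequality $\reg(I(H')) \le \reg(I(G'))$ follows immediately from the fact that regularity of edge ideals does not decrease when passing to an induced subgraph, exactly as cited at the end of the proof of Lemma~\ref{tech_lemma} via \cite[Proposition 4.1.1]{sean_thesis}. I expect the main obstacle to be the induced-subgraph verification: keeping careful track of the polarization variables (so that ``induced'' really makes sense across the two ambient rings) and handling even-connecting paths in $G$ that stray outside $V(H)$. The whisker/neighbor bookkeeping lemmas proved earlier (Lemmas~\ref{even_obs2}, \ref{single_vertex}, \ref{even_obs}) together with \cite[Lemma 6.13]{banerjee} should supply all the path-surgery tools needed, so the proof ought to be short once the setup is pinned down.
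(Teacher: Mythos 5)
Your first direction ($E(H') \subseteq E(G')$) and the final regularity conclusion via \cite[Proposition 4.1.1]{sean_thesis} are correct and are exactly the paper's argument: an even-connection (or edge) in $H$ with respect to $e_1\cdots e_s$ is verbatim one in $G$, since $E(H)\subseteq E(G)$ and the $e_i$ lie in $E(H)$. The gap is in the induced-ness step, which you yourself flag as the main obstacle and leave to a ``path-surgery'' argument that is both unnecessary and, as described, not a valid mechanism. You worry that an even-connection in $G$ between $a,b\in V(H')$ could ``wander through vertices of $G$ outside $H$'' and propose to shorten or reroute such a path ``using the chords guaranteed by $H$ being induced,'' invoking Lemma \ref{even_obs} and \cite[Lemma 6.13]{banerjee}. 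But induced-ness guarantees no such chords: it only says that an edge of $G$ with both endpoints in $V(H)$ is an edge of $H$, and it gives you nothing to reroute with if a path genuinely left $V(H)$. Your ``streamlined'' alternative then simply asserts the needed claim (that a generator $ab$ with $a,b\in V(H)$ ``forces, via induced-ness, an even-connection or edge inside $H$'') without proof, so the key step is never actually established.

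What closes the gap is an observation you never make: such a path cannot leave $V(H)$ at all. If $(a=p_0)p_1\cdots(p_{2k+1}=b)$ is an even-connection in $G$ with respect to $e_1\cdots e_s$, then by condition (2) of Definition \ref{even_connected} each edge $\{p_{2l+1},p_{2l+2}\}$, $0\le l\le k-1$, equals some $e_i\in E(H)$, so all interior vertices $p_1,\ldots,p_{2k}$ lie in $V(H)$, and the endpoints do by hypothesis. Consequently every connecting edge $\{p_{2r},p_{2r+1}\}$ is an edge of $G$ with both endpoints in $V(H)$, hence an edge of $H$ by induced-ness, and the identical path is an even-connection in $H$ with respect to $e_1\cdots e_s$ (the case $k=0$ is just the induced-edge case), giving $\{a,b\}\in E(H')$. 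This is the content behind the paper's brief remark that the subgraph is induced ``as in the previous lemma''; no shortening, rerouting, or appeal to \cite[Lemma 6.13]{banerjee} is needed, and without the observation above your sketch does not go through.
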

\begin{proof}
Let $a, b \in V(H)$, not necessarily distinct, be such that $a$ is
even-connected to $b$ in $H$ with respect to $e_1 \cdots e_s$.
For some $k \geq 0$, let $(a=p_0)p_1 \cdots p_{2k}(p_{2k+1}=b)$ be an
even-connection in $H$. Since
$H$ is an induced subgraph of $G$,
$(a=p_0)p_1 \cdots p_{2k}(p_{2k+1}=b)$ is an even-connection in $G$ with respect to 
$e_1 \cdots e_s$. Therefore
$a$ is even-connected to $b$ in $G$ with respect to $e_1 \cdots e_s$.
Hence $H'$ is a subgraph of $G'$.
Since $H$ is an induced subgraph of $G$ and $e_1, \ldots, e_s \in
E(H)$, any even-connection between vertices of $V(H)$ in $G$ with respect to $e_1\cdots e_s$ is
an even-connection in $H$ as well. Hence $H'$ is an induced subgraph
of $G'$.  The assertion on the regularity follows from
\cite[Proposition 4.1.1]{sean_thesis}.  
\end{proof}

Let the notation be as in  Notation \ref{esetup}. For some $1 \leq
\alpha \leq s$, set $e_\alpha =
\{x,y\}$. We further explore the even-connections between $N_{G'}[y]$ and
$N_{G'}(x)$. If $(u=p_0)p_1\cdots p_{2k}(p_{2k+1}=y)$ ($u$ may be
equal to $y$) is an even-connection in $G$ with respect 
to  $e_1 \cdots e_s$, then there are four possibilities:
\begin{enumerate}
	\item[(i)] $\{u,y\} \in E(G)$ i.e., $k=0$;
	\item[(ii)] $\{p_{2\lambda+1}, p_{2\lambda+2}\} \neq e_\alpha$ for any $0 \leq
	\lambda \leq k-1$;
\item[(iii)] There exists $0 \leq \lambda \leq k-1$ with $\{p_{2\lambda+1},
	p_{2\lambda+2}\} = e_\alpha$ and  $p_{2\lambda+1} = y, p_{2\lambda+2} =
	x$;
\item[(iv)] There exists $0 \leq \lambda \leq k-1$ with
	$\{p_{2\lambda+1},p_{2\lambda+2}\}=e_\alpha$, and
  for all such $\lambda$, we have $p_{2\lambda+1}=x$ and $p_{2\lambda+2}=y$.
\end{enumerate}
Let 
\begin{equation}\label{nota_nbd}
\begin{split}
	\X_y= \Big \{u \in V(G) \mid & \text{ there exists an even-connection } (u=p_0)p_1\cdots p_{2k}(p_{2k+1}=y)
 \\ & \text{ satisfies (i), (ii) or (iii) }\Big\}.
 \end{split}
\end{equation}
Note that $N_G(y) \subseteq \X_y$. It may also be noted that if $u \in
N_{G'}(y) \setminus \X_y$, then for any even-connection
$(u=p_0)p_1\cdots p_{2k}(p_{2k+1}=y)$, the conditions (i), (ii) and
(iii) are not satisfied.

We illustrate the definition of $\X_y$ with an example below. 
Let $G$ be the graph as shown in the figure below.
Let $e_1=\{x,y\}$, $e_2=\{x_3,x_4\}$, $e_3=\{x_5,x_6\}$,
$e_4=\{x_7,x_8\}$ and let $G'$ be the 
graph associated to $\widetilde{(I(G)^5:e_1e_2e_3e_4)}$.

\noindent
 \begin{minipage}{\linewidth}
\begin{minipage}{0.25\linewidth} 
\begin{figure}[H]
 \begin{tikzpicture}[scale=.55]
\draw (5,7)-- (7,7);
\draw (7,7)-- (8,8);
\draw (7,7)-- (8,6);
\draw (8,8)-- (8,6);
\draw (7,7)-- (8,9);
\draw (8,9)-- (8,10);
\draw (8,10)-- (7,10);
\draw (7,7)-- (7,9);
\draw (7,9)-- (5,9);
\draw (4,8)-- (5,7);
\draw (4,6)-- (5,7);
\draw (5,9)-- (4,8);
\begin{scriptsize}
\fill [color=black] (5,7) circle (1.5pt);
\draw[color=black] (5.14,6.66) node {$x$};
\fill [color=black] (7,7) circle (1.5pt);
\draw[color=black] (6.84,6.72) node {$y$};
\draw[color=black] (6.22,6.72) node {$e_1$};
\fill [color=black] (8,8) circle (1.5pt);
\draw[color=black] (8.28,8.26) node {$x_6$};
\fill [color=black] (8,6) circle (1.5pt);
\draw[color=black] (8.38,6.26) node {$x_5$};
\draw[color=black] (8.4,7.14) node {$e_3$};
\fill [color=black] (8,9) circle (1.5pt);
\draw[color=black] (8.48,9.06) node {$x_7$};
\fill [color=black] (8,10) circle (1.5pt);
\draw[color=black] (8.28,10.26) node {$x_8$};
\draw[color=black] (8.54,9.64) node {$e_4$};
\fill [color=black] (7,10) circle (1.5pt);
\draw[color=black] (7.28,10.26) node {$x_9$};
\fill [color=black] (7,9) circle (1.5pt);
\draw[color=black] (7.28,9.26) node {$x_4$};
\fill [color=black] (5,9) circle (1.5pt);
\draw[color=black] (4.94,9.38) node {$x_3$};
\draw[color=black] (6.22,9.46) node {$e_2$};
\fill [color=black] (4,8) circle (1.5pt);
\draw[color=black] (3.4,8.06) node {$x_2$};
\fill [color=black] (4,6) circle (1.5pt);
\draw[color=black] (3.74,6.32) node {$x_1$};
\end{scriptsize}
\end{tikzpicture}

\end{figure}
\end{minipage}
\begin{minipage}{0.74\linewidth}
Then $x_{9}x_8x_7y$ and $yx_6x_5y$  are  
even-connections in $G$ with respect to $e_1e_2e_3e_4$.
Both even-connections satisfy (ii). Hence $x_9, y
\in \X_y$. The even-connection, $x_2xyx_6x_5y$, with respect to
$e_1e_2e_3e_4$ does not satisfy, (i), (ii) and (iii). At the same time, 
$x_2x_3x_4y$ is an even-connection with respect to $e_1e_2e_3e_4$ and it
satisfies (ii). Therefore $x_2 \in \X_y$. Hence $\X_y=\{x,x_4,x_6,x_5,x_7,x_9,y,x_2\}$.
It can also be noted that $x_1 \notin \X_y$.
\end{minipage}
\end{minipage}

The following lemma will play a crucial role in the study of the regularity of
powers of edge ideals in the next section.

\begin{lemma}\label{even-lemma}
Let the notation be as  above. 
For some $1 \leq \alpha \leq s$, set $e_{\alpha}=\{x,y\}$.
 \begin{enumerate}
   \item If $N_{G'}(y) \setminus \X_y \neq \emptyset$, then $y \in \X_y$. 
   \item If $y$ is even-connected to itself, then $y \in \X_y$.
 \item If $u \in \X_y$, then $G' \setminus N_{G'}[u]$ is an
   induced subgraph of  $(G \setminus N_{G}[u,x])',$ where 
   $(G \setminus N_G[u,x])'$ is the graph
  associated to $\widetilde{(I(G \setminus N_G[u,x])^{t+1}:e_{j_1} \cdots e_{j_t})}$
   and $\{e_{j_1},
  \ldots, e_{j_t}\}=E(G\setminus N_G[u,x]) \cap \{e_1, \ldots, e_s\}$. 
   In particular,
   $$\reg(I(G' \setminus N_{G'}[u]))\leq \reg(I((G \setminus N_{G}[u,x])')).$$
  \item The graph $G' \setminus \X_y$ is an induced subgraph of $(G \setminus N_G[y])'$, where 
  $(G \setminus N_G[y])'$ is the graph
  associated to $\widetilde{(I(G \setminus N_G[y])^{t+1}:e_{j_1} \cdots e_{j_t})}$
   and $\{e_{j_1},
  \ldots, e_{j_t}\}=E(G\setminus N_G[y]) \cap \{e_1, \ldots, e_s\}$. 
   In particular,
   $$\reg(I(G' \setminus \X_y))\leq \reg(I((G \setminus N_{G}[y])')).$$
 \end{enumerate} 
\end{lemma}
\begin{proof}
(1) Let $u\in N_{G'}(y) \setminus \X_y$ and $(u=p_0)p_1 \cdots (p_{2k+1}=y)$ be an even-connection in $G$
with respect to $e_1 \cdots e_s$.
Since $u \notin \X_y$, there exists $0 \leq \lambda \leq k-1$ with 
$\{p_{2\lambda+1},p_{2\lambda+2}\}=e_\alpha$ with $p_{2\lambda +1} =
x$ and $p_{2\lambda+2} = y$. Let $\gamma$ be the
largest integer with this 
property. Note that $p_{2\gamma+2}=y$.
Then $(y=p_{2\gamma+2})p_{2\gamma+3} 
 \cdots p_{2k}(p_{2k+1}=y)$ is an even-connection in $G$ and
 $\{p_{2\lambda'+1},p_{2\lambda'+2}\} \neq e_\alpha$
 for all $\gamma+1 \leq \lambda' \leq k-1$.  Therefore
 $y \in \X_y$. 
\vskip 1mm \noindent
(2) 
Assume, in contrary, that
any even-connection from $y$ to $y$ with respect
to $e_1 \cdots e_s$
 does not satisfy (ii) and (iii). Then by taking $u =
y$ in the proof of (1), it can be seen that there exists an
even-connection from $y$ to itself which satisfy the condition (ii).
Hence $y \in \X_y$.

\vskip 1mm \noindent
(3) Set $H=G' \setminus N_{G'}[u]$ and $K=(G \setminus N_{G}[u,x])'$. 
Let $a, b \in V(H)$, not necessarily distinct, be such that $a$ is
even-connected to $b$ in $G$ with respect to $e_1 \cdots e_s$. 
Let $(a=q_0)q_1 \cdots (q_{2l+1}=b)$ be an
even-connection in $G$ with respect to $e_1 \cdots e_s$.
We claim that $q_r \notin N_G[u,x]$ for all $0 \leq r \leq 2l+1$.
Note that by Lemma \ref{even_obs}, $q_r \notin N_{G'}[u]$ for all $0 \leq r \leq 2l+1$.
Suppose
$q_r \in N_{G}[x]$ for some $0 \leq r \leq 2l+1$. Since $u \in \X_y$, $u$ is even-connected to 
$q_r$ in $G$ with respect to $e_1 \cdots e_s$.
By Lemma \ref{even_obs}, $u$ is even-connected either to
$a$ or to $b$ in $G$ with respect to $e_1 \cdots e_s$.  Hence either
$a$ or $b$ belongs to $N_{G'}[u]$.
This is a contradiction to our assumption that
$a,b \in V(H)$.
Hence $q_r \notin N_{G}[x]$ for all $0 \leq r \leq 2l+1$. Therefore
$a$ is even-connected to $b$ in $G \setminus N_G[u,x]$ with respect to
$e_{j_1} \cdots e_{j_t}$. 
\vskip 1mm \noindent
(4) Let  $a, b \in V(G' \setminus \X_y)$ and 
$a$ be even-connected to $b$ in $G$ with respect to 
$e_1 \cdots e_s$.
Using the fact that $N_G(y) \subseteq \X_y$ and by (1) and (2), we get
$a,b \notin N_G[y]$. Hence, if $\{a, b\} \in E(G)$, then $\{a,b\} \in
E(G \setminus N_G[y])$.
 Let
$(a=q_0)q_1 \cdots (q_{2l+1}=b)$ be an
even-connection in $G$ with respect to $e_1 \cdots e_s$. 
We claim that $q_j \notin N_G[y]$ for any $j$. Suppose $q_j \in N_G(y)$ for some 
$0 \leq j \leq 2l+1$. If $j$ is odd, then choose the largest integer $r$ such that $q_r \in N_G(y)$.
Then $(b=q_{2l+1})q_{2l} \cdots q_r y$ is an even-connection in $G$ with respect to 
$e_1 \cdots e_s$. Since $r$ is the largest integer, $q_j \notin N_G[y]$ for all $r < j \leq 2l+1$.
Therefore $b \in \X_y$ which is a contradiction. Now if $r$ is even,
then choose the smallest integer $r$ such that $q_r \in N_G(y)$.
Then $(a=q_{0})q_{1} \cdots q_r y$ is an even-connection in $G$ with respect to 
$e_1 \cdots e_s$. Therefore $a \in \X_y$ which again is a
contradiction. Hence $q_j \notin N_G(y)$ for all $j$.
If $q_j=y$ for some $0 \leq j \leq 2l+1$, then $1\leq j \leq 2l$ either $q_{j-1} \in N_G(y)$ or 
$q_{j+1} \in N_G(y)$ which contradicts the first part of the proof.
This completes the proof of the claim.
This shows that $a$ is even-connected to $b$ in $G \setminus N_G[y]$
with respect to $e_{j_1} \cdots e_{j_t}$.

As in Lemma \ref{ind-reg}, it can be seen that the subgraphs
considered in (3) and (4) are induced subgraphs.
The assertion on the regularity in (3) and (4) follows from \cite[Proposition
4.1.1]{sean_thesis}.  
\end{proof}

\section{Regularity of powers of graphs}\label{reg-pow}

In this section, we obtain a general upper bound for the regularity of
powers of edge ideals of graphs. The first main theorem gives certain
sufficient conditions for any combinatorial invariant to be an upper
bound for the constant term of the linear polynomial corresponding to
$\reg(I(G)^q)$. The below result can be seen as a different version of
\cite[Theorem 3.3]{BBH19}.

\begin{theorem}\label{main-result}
Let $G$  be a graph 
and $\rho: \mathcal{I}_G \longrightarrow 
\mathbb{N}$ be a function such that for any $L \in \mathcal{I}_G$,
\begin{enumerate}
 \item $\reg(I(L)) \leq \rho(L)+1$,
 \item $\rho(L_1) \leq \rho(L)$ for any induced subgraph $L_1$ of $L$
   and
 \item there exists a vertex $x \in V(L)$ such that $\rho(L \setminus N_L[x])+1 \leq \rho(L)$.
\end{enumerate}
Then 
\[
 \reg(I(G)^q) \leq 2q+\rho(G)-1 ~\text{ for all $q \geq 1$.}
\]
\end{theorem}
\begin{proof}
Let $G$ be a graph and $\rho : \mathcal{I}_G \longrightarrow
\mathbb{N}$ be a function satisfying the given hypotheses. We prove the
assertion by induction on $q$.
The case $q=1$ follows from the assumption.
Assume that $q >1$. 
For any graph $K$, set 
 \[
  \PP(K)=\{x \in V(K) \mid \rho(K) \geq \rho(K \setminus N_K[x])+1\}.
   \]
By hypothesis, $\PP(G) \neq \emptyset$.  By applying \cite[Theorem
5.2]{banerjee} and using induction, it is enough to prove that for
edges $e_1, \ldots, e_s$ of $G$, $\reg((I(G)^{s+1}:e_1 \cdots e_s))
\leq \rho(G)+1$ for all $s \geq 0$.  Let $G'$ be the graph associated
to the ideal $\widetilde{(I(G)^{s+1}:e_1\cdots e_s)}$ which is
contained in an appropriate polynomial ring $R_1$. We prove that
$\reg(I(G')) \leq \rho(G) + 1$ by induction on $s+|V(G)|$.  

If $s = 0$, then $G' = G$ and hence the assertion is true for any
value of $|V(G)|$. Therefore, we may assume that $s \geq 1$. If
$|V(G)| = 2$, then $G$ consists of only one edge. In this case, we also have $G' = G$ and hence the
assertion is true. Now, assume that $s \geq 1$ and $|V(G)| > 2$.

Let $e_i = \{a_i, b_i\}$ for $1 \leq i \leq s$. If
$\deg_G(a_i) = 1$ or $\deg_G(b_i) = 1$ for some $i$, then by Lemma
\ref{even_obs2}, it follows that 
\[\reg(I(G)^{s+1} : e_1\cdots e_s) = \reg(I(G)^s : e_1\cdots
  e_{i-1}e_{i+1}\cdots e_s) \leq \rho(G)+1,\]
where the last inequality follows from the hypothesis of induction. 

Assume now that $\deg_G(a_i) \geq 2$ and $\deg_G(b_i) \geq 2$ for all
$1 \leq i \leq s$.
%
  \vskip 1mm \noindent
\textsc{Case 1:} Suppose $e_i \cap \PP(G) \neq \emptyset$ for some $1
\leq i \leq s$. 
\vskip 1mm \noindent
Without loss of generality, we may assume that $e_s \cap \PP(G) \neq
\emptyset$ and $a_s \in \PP(G)$. Let $J=I(G')$.
Following the notation as in (\ref{nota_nbd}),
set $\X_{b_s}=\{y_1,\ldots,y_p\}$.
It follows from the collection of short exact sequences:
\begin{eqnarray*}\label{main-exact-seq}
  0 & \longrightarrow & \frac{R_1}{(J : y_1)}(-1)
  \overset{\cdot y_1}{\longrightarrow} \frac{R_1}{J} \longrightarrow
  \frac{R_1}{J + (y_1)} \longrightarrow 0;\nonumber \\
& & \hspace*{0.5cm} \vdots\hspace*{3cm}\vdots \hspace*{3cm}\vdots
\\ 
0 & \longrightarrow & \frac{R_1}{( (J+
  (y_1,\ldots,y_{p-1})):y_p)}(-1) 
\overset{\cdot y_p}{\longrightarrow}
\frac{R_1}{J+(y_1,\ldots,y_{p-1})} \longrightarrow
\frac{R_1}{J+ (\X_{b_s})} \longrightarrow 0,\nonumber
\end{eqnarray*}
that
\[
  \reg(R_1/J)  \leq  \max \left\{
	\begin{array}{l}
	  \reg\left(\frac{R_1}{(J :y_1)}\right)+1, \ldots, 
		\reg\left(\frac{R_1}{((J+(y_1,\ldots,y_{p-1})):y_p)}\right) + 1,~ 
		\reg\left(\frac{R_1}{J+(\X_{b_s})}\right)
\end{array}\right.
\Big\}.
\]
 
Now,
\[
  \begin{array}{llll}
\reg(J+(\X_{b_s}))& = & \reg(I(G' \setminus \X_{b_s})) &
(\text{by \cite[Remark 2.5]{selvi_ha}}) \\
 & \leq&  \reg(I((G \setminus
 N_{G}[b_s])')), & (\text{by Lemma \ref{even-lemma}(4)})
 \end{array}\]
 where  
 $E(G \setminus N_G[b_s]) \cap
 \{e_1,\ldots,e_s\}=\{e_{j_1},\ldots,e_{j_t}\}$ and $(G \setminus N_{G}[b_s])'$
 is the graph associated to $\widetilde{(I(G \setminus
   N_G[b_s])^{t+1}:e_{j_1} \cdots e_{j_t})}$.   
Therefore
$$\reg(J+ (\X_{b_s})) \leq \reg(I( (G \setminus N_G[b_s])')) \leq \rho(G \setminus N_G[b_s])+1 \leq \rho(G)+1,$$
where the second and last inequalities follows by induction on the number of
vertices and the assumption (2) respectively.
Using similar arguments, we get
$$
\begin{array}{llll}
  \reg((J : y_i))=\reg(I(G' \setminus N_{G'}[y_i])) & \leq & \reg(I((G \setminus N_G[y_i,a_s])')) &
  (\text{by Lemma } \ref{even-lemma}(3)) \\
  & \leq & \reg(I((G \setminus N_G[a_s])')) & (\text{by Lemma }
	\ref{ind-reg}) \\
	& \leq&  \rho(G \setminus N_G[a_s])+1 & (\text{by induction}) \\
	& < & \rho(G)+1, &
\end{array}
$$
where the last inequality follows by  the assumption that  $a_s \in \PP(G)$.

Note that $( (J+(y_1,\ldots,y_{i-1})):y_i)$ is the edge ideal of 
$(G' \setminus N_{G'}[y_i]) \setminus \{y_1,\ldots,y_{i-1}\}$ and 
$(J:y_i)$ is the edge ideal of $G' \setminus N_{G'}[y_i]$. 
Since $(G' \setminus N_{G'}[y_i]) \setminus \{y_1,\ldots,y_{i-1}\}$ is an 
induced subgraph of $G' \setminus N_{G'}[y_i]$, it follows that
$$\reg(( (J+(y_1,\ldots,y_{i-1})):y_i)) \leq \reg(J:y_i)< \rho(G)+1.$$

Therefore  $\reg(J) \leq \rho(G)+1$.
\vskip 1mm \noindent
\textsc{Case 2:}
Suppose $e_i \cap \PP(G) = \emptyset$ for all $1 \leq i \leq s$.
Let $x \in \PP(G)$. Then by \cite[Theorem 3.4]{Ha2}, 
$$\reg(I(G')) \leq \max
\Big\{\reg(I(G' \setminus x)), \reg(I(G' \setminus
N_{G'}[x]))+1\Big\}.$$
By Lemma \ref{single_vertex} and inductive hypothesis we get 
\[\reg(I(G' \setminus x)) =
\reg(I(G\setminus x)^{s+1} : e_1 \cdots e_s)) \leq \rho(G \setminus x)
+ 1 \leq \rho(G) + 1.\] 
Similarly, by Lemma \ref{tech_lemma} and inductive hypothesis we get  
\[\reg(I(G' \setminus N_{G'}[x])) \leq \reg(I(G
\setminus N_G[x])^{t+1} : e_{i_1}\cdots e_{i_t}) \leq \rho(G \setminus
N_G[x]) + 1 \leq \rho(G),\] where 
the last inequality follows by the assumption that $x \in \PP(G)$, and 
$ \{e_{i_1}, \ldots, e_{i_t}\}=E(G\setminus
N_G[x]) \cap \{e_1, \ldots, e_s\}$.
Therefore $\reg(I(G')) \leq \rho(G)+1$. 

This
completes the proof.
\end{proof}

As a consequence of Theorem \ref{main-result}, we obtain a sufficient
condition for the Conjecture \ref{ABBH-conj} to be true.
\begin{corollary} \label{weak-conj1} Let $G$ be a graph. If every non-empty
induced  subgraph $H$ of 
$G$ has a vertex $x$ with $\reg(I(H \setminus N_H[x]))+1 \leq
\reg(I(H))$, then for all $q \geq 1$,
\[
 \reg(I(G)^q) \leq 2q+\reg(I(G))-2.
\] 
\end{corollary}
\begin{proof}
For $L \in \mathcal{I}_G$, let $\rho(L) = \reg(I(L)) - 1$. 
Then it is
easy to see that $\rho$ satisfies (1) - (3) of Theorem
\ref{main-result}. Hence the assertion follows.
\end{proof}

It is interesting to ask if every graph $G$ has a vertex $x$ with
$\reg(I(G \setminus N_G[x])) + 1 \leq \reg(I(G))$.
It was communicated to us by Tran Nam Trung that there exists a graph
$G$ which does not satisfy the hypothesis of Corollary \ref{weak-conj1}.
Let $G$ be the graph denoted by $G_2$ in Appendix A of \cite{kat},
page 452. Then for every vertex $x$ of $G$, it can be verified that
$\reg(I(G)) = \reg(I(G \setminus N_G[x]))$.
Hence we would like to ask:

\begin{question}
Can we classify graphs $G$ having
a vertex $x$ such that $\reg(I(G \setminus N_G[x])) + 1 \leq \reg(I(G))$?
\end{question}

As more applications of Theorem \ref{main-result}, we obtain upper
bounds for the regularity of powers of edge ideals.
We first recall the
definitions of the invariants $\cochord(G)$ and $\zeta(G)$. 

The \textit{complement} of a graph $G$, denoted by $G^c$, is the graph on the same
vertex set as $G$ in which $\{u,v\}$ is an edge of $G^c$ if and only if it is not an edge of $G$.
A graph $G$ is \textit{chordal} if every induced
cycle in $G$ has length $3$, and is co-chordal if the complement graph $G^c$ is chordal.
The \textit{co-chordal cover number}, denoted $\cochord(G)$, is the
minimum number $n$ such that there exist co-chordal subgraphs
$H_1,\ldots, H_n$ of $G$ with $E(G) = \bigcup_{i=1}^n E(H_i)$.

Now we recall the definition of $\zeta(G)$ from \cite{HaWood}.
 A \textit{star} at $x$, which is the subgraph on $N_G[x]$ with edge set consisting of all edges 
 of $G$ incident to $x$. We say that a star is \textit{nondegenerate} if $\deg_G(x) > 1$, 
 so that the star does not consist of a single vertex or a single edge. 
 We say a set
of stars is \textit{center-separated} if the center of a star and at least two of its neighbors are not 
contained in any other star. 
In the collection of all stars in $G$, 
a \textit{maximal center-separated} star packing of $G$ is a
center-separated star packing of $G$ that is not a subset of any other
center-separated star packing.
Let $\PP$ be a maximal center-separated star packing.
After deleting the vertices of the stars  $\PP$, 
an induced matching of $G$ will remain. Let $\zeta_{\PP}$ be the number of stars
in the packing plus the number of edges in 
the remained induced matching and let $\zeta(G)$ be the maximum $\zeta_{\PP}$ over all maximal center-separated packings of 
nondegenerate stars. 

For example, if $G = C_n$, cycle on $\{x_1,\ldots,x_n\}$ vertices, then for any $x \in
V(G)$, star at $x$ is a path on $3$ vertices and hence $G \setminus
N_G[x]$ is a path on $n-3$ vertices. If $n\equiv 0,1(mod~3)$, then 
$\PP= \{$ star at $x_1$,
star at $x_4$,  star at $x_7,\ldots, $ star at $x_{n-2}\}$
is a center-separated star packing of $G$.
Therefore $\zeta_{\PP}=\lfloor \frac{n}{3} \rfloor$.
If $n \equiv 2 (mod~3)$, then 
$\PP'=\{$ star at  $x_1$,  star at  $x_4,\ldots,$  star at $x_{n-4}\}$
is a center separated star packing of $G$. Note that $G \setminus \PP'$
consists of a single edge.
Hence $\zeta_{\PP'}=\lfloor \frac{n}{3} \rfloor +1$.
Therefore, if $n \equiv 0,1(mod~3)$, then $\zeta(G) \geq \lfloor \frac{n}{3} \rfloor$ and 
if $n \equiv 2 (mod~3)$, then $\zeta(G) \geq \lfloor \frac{n}{3} \rfloor+1$. It is not hard to 
verify that the above inequalities are in fact equalities.
Also, note that co-chordal graphs do not have two disjoint edges. 
Therefore, we have:

\begin{enumerate}
  \item if $n \equiv 0 (mod~3)$ or $n=4$, then $\nu(C_n)=\cochord(C_n)=\zeta(C_n)=\lfloor\frac{n}{3}\rfloor$;
  \item  if $n \equiv 1 (mod~3)$ and $n>4$, then $\nu(C_n)=\zeta(C_n)=\cochord(C_n)-1=\lfloor \frac{n}{3} \rfloor$;
  \item if $n \equiv 2 (mod~3)$, then $\nu(C_n)=\zeta(C_n)-1=\cochord(C_n)-1=\lfloor \frac{n}{3} \rfloor$.
 \end{enumerate}

It may be noted that for any graph $G$, $\nu(G) \leq \zeta(G)$.
H\`a and Woodroofe proved that for a graph $G$, 
$\reg(I(G)) \leq \zeta(G)+1,$ \cite{HaWood}.
Also, it was proved by Woodroofe that 
$\reg(I(G)) \leq \cochord(G) + 1$, \cite[Theorem
1]{russ}. We would like to note here that the invariants
$\zeta(G)$ and $\cochord(G)$ are not comparable in
general. 
For example, if $G=C_7$, then $\cochord(G)=3$ and $\zeta(G)=2$. If $H$ is
the graph with $E(H)=\Big\{ \{x_1,x_2\}, \{x_2,x_3\},\{x_3,x_4\},\{x_4,x_1\},\{x_1,x_5\}\Big\}$,
then it is easy to see that $\cochord(H)=1$ and $\zeta(H)=2$. Now we
prove one of the main results of this paper.

\begin{theorem}\label{mainresult}

Let $G$  be a graph.  Then for all $q \geq 1$,
\begin{enumerate}
 \item $\reg(I(G)^q) \leq 2q+\zeta(G)-1.$
 \item $\reg(I(G)^q) \leq 2q+\cochord(G)-1$.
\end{enumerate}
\end{theorem}
\begin{proof}
From \cite[Theorem 1.6]{HaWood} and \cite[Theorem 1]{russ}, it follows that for any graph $G$,
$\reg(I(G)) \leq \zeta(G)+1$ and $\reg(I(G)) \leq \cochord(G)+1$.
It is easy to see that, for any induced subgraph $L$ of $G$, $\zeta(L) \leq \zeta(G)$ and 
$\cochord(L) \leq \cochord(G)$.
\vskip 1mm \noindent
(1) 
Suppose that there exists a 
vertex $x$ in $G$ such that $\deg_G(x) \geq 2$. 
Let $H = G \setminus N_G[x]$. Let $\PP'$ be a maximal center-separated star packing of $H$ such
that $\zeta_{\PP'}(H) = \zeta(H)$. Then $\PP = \PP' \cup \{\text{ star at } x\}$
is a center-separated star packing of $G$. Thus, 
$\zeta(H) + 1 = \zeta_{\PP'}(H) + 1 = \zeta_{\PP}(G) \leq \zeta(G).$
If $\deg_G(x)=1$ for all $x \in V(G)$, then by 
definition $\zeta(G \setminus N_G[x])+1 \leq \zeta(G)$ for all $x \in V(G)$. 
Therefore, by Theorem \ref{main-result}, $\reg(I(G)^q) \leq 2q+\zeta(G)-1.$
\vskip 1mm \noindent
(2) 
It follows from 
\cite[Lemma 3.1]{SY18} that there is a vertex $x \in V(G)$ such that 
$\cochord(G \setminus N_G[x])+1 \leq \cochord(G)$. Hence, by Theorem \ref{main-result}, $\reg(I(G)^q) \leq 2q+\cochord(G)-1$.
\end{proof}
A graph $G$ is said to be \textit{weakly chordal} if
neither $G$ nor $G^c$ contain induced cycle of length $5$ or more.
It is straightforward to show that a chordal graph is weakly chordal.
The \textit{matching number} of $G$, denoted by $\ma(G)$, is the
maximum cardinality among matchings of $G$ 
and the minimum matching number of $G$, denoted by $\minmax(G)$, is
the minimum cardinality among maximal matchings of $G$. 
By \cite[p. 2]{hibi}, \cite[Theorem 1]{russ}, \cite[p. 10]{jayanthan},
for any graph $G$, we have
\[
 \nu(G) \leq \cochord(G) \leq \minmax(G) \leq \ma(G).
\]

Many authors have studied classes of graphs whose induced matching number coincides
with $\cochord(G), \minmax(G)$ or $\ma(G)$. For example, it is known
that $\nu(G) = \cochord(G)$ for 
unmixed bipartite graphs (\cite[Theorem 16]{russ}), 
weakly chordal graphs (\cite[Proposition 3]{busch_dragan_sritharan})
and bipartite graphs with $\reg(I(G))=3$ (\cite[Observation 5.3]{jayanthan}).
For
Cameron-Walker graphs $\nu(G) = \ma(G)$,  (\cite{CamWalker, HHKO}).
Hibi et al. studied the class of graphs for which $\nu(G) =
\minmax(G)$, \cite{hibi}. 
Beyarslan, H\`a and Trung proved that $\reg(I(G)^q) \geq 2q + \nu(G) -
1$ for any graph $G$ and for all $q \geq
1$, \cite[Theorem 4.5]{selvi_ha}. Hence, we can use Theorem
\ref{mainresult} for the above mentioned classes
of graphs to get $\reg(I(G)^q)=2q+\nu(G)-1$ for all $q \geq 1$.

\begin{corollary}\label{known-results} The following hold:
 \begin{enumerate}
 \item If $G$ is a weakly chordal graph, then 
  $\reg(I(G)^q)=2q+\nu(G)-1$ for all $q \geq 1$.
  \item If $G$ is a Cameron-Walker graph, then $\reg(I(G)^q)=2q+\nu(G)-1$ for all $q \geq 1$.
 \item \cite[Theorem 3.2]{HHZ} If $I(G)$ has a linear resolution (i.e., $G^c$ is chordal), then $I(G)^q$ has a linear resolution for all $q \geq 2$.
  \item \cite[Theorem 4.7]{selvi_ha} If $G$ is a forest, then $\reg(I(G)^q)=2q+\nu(G)-1$ for all $q \geq 1$.
  \item \cite[Corollary 5.1(1)]{jayanthan} If $G$ is an unmixed bipartite graph, then $\reg(I(G)^q)=2q+\nu(G)-1$ for all $q \geq 1$.
  \item \cite[Theorem 3.9]{banerjee1} If $G$ is a bipartite graph and $\reg(I(G))=3$, then 
  $\reg(I(G)^q)=2q+1$ for all $q \geq 1$.
 \end{enumerate}
\end{corollary}

For two vertex disjoint graphs $G_1$ and $G_2$, we denote the union of
$G_1$ and $G_2$ by $G_1 \coprod G_2$ i.e., $V(G_1 \coprod G_2)=V(G_1)
\cup V(G_2)$ and $E(G_1 \coprod G_2)=E(G_1) \cup E(G_2)$.  Note that
$\Psi(G_1 \coprod G_2)=\Psi(G_1)+\Psi(G_2)$ whenever $\Psi(-)$ is
equal to $\nu(-)$, $\cochord(-)$, $\minmax(-)$, $\ma(-)$ or
$\zeta(-)$.  If $G_1$ and $G_2$ are graphs for which the linear
polynomials corresponding to $\reg(I(G_1)^q)$ and $\reg(I(G_2)^q)$ are
known, then using \cite[Theorem 5.7]{nguyen_vu} it is possible to
compute the linear polynomial corresponding to $\reg(
(I(G_1)+I(G_2))^q)=\reg(I(G_1 \coprod G_2)^q)$. 
\begin{proposition}\label{disjoint-reg} 
Let $G_1=(\coprod_{i=1}^t C_{n_i})$
where $t \geq 1$ and $n_1,\ldots,n_t \equiv 2(mod~3)$.
Let $G_2$ be an arbitrary graph and set 
$G=G_1\coprod G_2$.
\begin{enumerate}
 \item Then $\reg(I(G_1)^q)=2q+\nu(G_1)+t-2$ for all $q \geq 2$.
 \item If $\reg(I(G_2)^q)=2q+\nu(G_2)-1$ for all $q \geq 1$, 
 then $\reg(I(G)^q)=2q+\nu(G)+t-1$ for all $q \geq 2$.
\end{enumerate} 
\end{proposition}
\begin{proof}
By \cite[Lemma 8]{russ}, \cite[Theorem 7.6.28]{sean_thesis},
$\reg(I(G_1))=\nu(G_1)+t+1$.
\vskip 1mm
\noindent
 (1) 
We prove  by induction on $t$. If $t=1$, then the assertion follows from 
\cite[Theorem 5.2]{selvi_ha}. 
 Set $H=\coprod_{i=1}^{t-1}C_{n_i}$. Also, from \cite[Lemma 8]{russ}, we get
$\reg(I(H)) = \nu(H) + t$.
By inductive hypothesis on $t$, 
$\reg(I(H)^q)=2q+\nu(H)+(t-1)-2 \text{ for all } q \geq 2.$
Note that $G_1=H \coprod C_{n_t}$.
  By \cite[Proposition 2.7]{HTT},
$\reg(I(G_1)^2)=2+\nu(G_1)+t.$
Using \cite[Theorem 5.2]{selvi_ha} and \cite[Theorem 1.1]{nguyen_vu}, we get
$\reg(I(G_1)^3)=4+\nu(G_1)+t.$
When $q \geq 4$, we can apply \cite[Theorem 5.7]{nguyen_vu}
by taking 
$I = I(H)$ and $J = I(C_{n_t})$.
Note that $g = \nu(H) + t-3$, $g^*
= \nu(H) + t-2$, $h = \nu(C_{n_t})-1$ and $h^* = \nu(C_{n_t})$
with notation of \cite[Theorem 5.7]{nguyen_vu}.
Then we get 
$\reg(I(G_1)^q)=2q+\nu(G_1)+t-2$ for all $q \geq 4$.

\vskip 1mm
\noindent
(2) It follows from \cite[Proposition 2.7]{HTT} that
$\reg(I(G)^2)=3+\nu(G)+t$. 
Similarly to the previous case, by applying \cite[Theorem 5.7]{nguyen_vu}
with $I=I(G_1)$ and $J=I(G_2)$, we have 
$\reg(I(G)^q)=2q+\nu(G)+t-1$ for all $q \geq 3$.
\end{proof}

Trung proved that for a graph $G$, $\reg(I(G)) = \ma(G)+1$ if and only
if  each connected component of $G$ is either $C_5$ or a
Cameron-Walker graph, \cite[Theorem 11]{Trung18}. As an immediate
consequence of our previous result, we compute the regularity of
all powers of such graphs.
\begin{proposition}\label{mat-reg}
If $\reg(I(G))=\ma(G)+1$, then either $\reg(I(G)^q)=2q+\ma(G)-2$ for
all $q \geq 2$ or $\reg(I(G)^q)=2q+\ma(G)-1$ for all $q \geq 2$.
\end{proposition}
\begin{proof}
Since $\reg(I(G))=\ma(G)+1$, by \cite[Theorem 11]{Trung18}, it follows that
$$G= (\coprod_{i=1}^t C_5)
\coprod (\coprod_{j=1}^l H_j),$$ where $H_j$ are Cameron-Walker
graphs, for some $t, l \geq 0$. Note that $\nu(\coprod_{i=1}^t
C_5)=t=\ma(\coprod_{i=1}^t C_5)-t$ and $\nu(\coprod_{j=i}^l H_j)=\ma(\coprod_{j=i}^l H_j)$.
If $l=0$, then by Proposition \ref{disjoint-reg}(1), $\reg(I(G)^q)=2q+\nu(G)+t-2$ 
for all $q \geq 2$.  Therefore
$\reg(I(G)^q)=2q+\ma(G)-2$ 
for all $q \geq 2$.
Suppose $t=0$.  Then by Theorem \ref{mainresult} and 
\cite[Theorem 4.5]{selvi_ha}, $\reg(I(G)^q)=2q+\nu(G)-1=2q+\ma(G)-1$ for all $q \geq 2$. If 
$t>0$ and $l>0$, then by Proposition \ref{disjoint-reg}(2), for all $q \geq 2$, 
$\reg(I(G)^q)=2q+\nu(G)+t-1=2q+
\nu(\coprod_{i=1}^t C_5)+\nu(\coprod_{j=1}^l H_j)+t-1=2q+\ma(G)-1.$
\end{proof}

Using Proposition \ref{disjoint-reg},
we obtain a class of graphs for which the upper bound in
Theorem \ref{mainresult}(1) is attained.

\begin{proposition}\label{disjoint}
 For $p \geq 0$ and $r > p$, let
$H=\left(\coprod_{i=1}^p C_{n_i}\right) \coprod \left(\coprod_{j=p+1}^rC_{n_j}\right),$ where 
$n_1,\ldots,n_p \equiv 2(mod~3)$ and $n_{p+1},\ldots,n_r \equiv
0,1(mod~3)$.
Then for all $q \geq 1$,
$$\reg(I(H)^q)=2q+\zeta(H)-1.$$ 
\end{proposition}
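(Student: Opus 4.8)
The plan is to prove the reverse of the inequality supplied by Theorem \ref{main-first}, namely $\reg(I(H)^q)\geq 2q+\zeta(H)-1$ for every $q\geq 1$; together with Theorem \ref{main-first} this forces equality.

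The proof rests on two ingredients. The first is that $\zeta$ is additive over disjoint unions: since $N_G[v]$ is contained in the connected component of $v$, a star packing of $H$ is precisely a choice of star packing in each component (and isolated‑vertex removals stay within components), so $\zeta(H)=\sum_{i=1}^{p}\zeta(C_{n_i})+\sum_{j=p+1}^{r}\zeta(C_{n_j})$; substituting the values of $\zeta(C_n)$ recalled in Section \ref{reg-pow} makes $\zeta(H)$ explicit, but only the additivity is needed below. The second ingredient is the exact regularity of powers of a single cycle: by the Beyarslan–H\`a–Trung computation for cycles \cite{selvi_ha}, one has $\reg(I(C_n)^t)=2t+\zeta(C_n)-1$ for every $n\geq 3$ and $t\geq 1$ — their two cases $n\equiv 0,1\pmod 3$ and $n\equiv 2\pmod 3$ merge into this single formula once the values of $\zeta(C_n)$ from Section \ref{reg-pow} are inserted, and the degenerate cycles $C_3=K_3$, $C_4=K_{2,2}$ also satisfy $\reg(I(C_n)^t)=2t=2t+\zeta(C_n)-1$.

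Next I would glue the components together using the behaviour of regularity of powers under disjoint union \cite{nguyen_vu}: iterating the two‑component statement over the $r$ components of $H$ gives
\[
  \reg(I(H)^q)=\max\Big\{\,\textstyle\sum_{\ell=1}^{r}\reg(I(C_{n_\ell})^{a_\ell})\ \Big|\ a_\ell\geq 1,\ \textstyle\sum_{\ell=1}^{r}a_\ell=q+r-1\,\Big\}-(r-1).
\]
Substituting the cycle formula, the bracketed quantity equals $2(q+r-1)+\zeta(H)-r$ for \emph{every} admissible tuple $(a_1,\dots,a_r)$, so the maximum is this common value and
\[
  \reg(I(H)^q)=2(q+r-1)+\zeta(H)-r-(r-1)=2q+\zeta(H)-1,
\]
which is the assertion. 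If one prefers to use only the eventual‑linearity form of the disjoint‑union statement quoted just before the proposition, it suffices to use the easy ``$\geq$'' half of the disjoint‑union inequality with the tuple $a_1=q$, $a_2=\dots=a_r=1$, which already yields $\reg(I(H)^q)\geq 2q+\zeta(H)-1$, and then to invoke Theorem \ref{main-first} for the opposite inequality.

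The delicate point is exactly this gluing step: one needs the disjoint‑union formula (or at least the lower bound $\reg(I(G_1\coprod G_2)^t)\geq \reg(I(G_1)^{t_1})+\reg(I(G_2)^{t_2})-1$ with $t_1+t_2=t+1$) to be valid for \emph{all} $t\geq 1$, not merely for $t\gg 0$. One cannot replace it by the cheap bound $\reg(I(G)^q)\geq 2q+\nu(G)-1$ of \cite{selvi_ha}: since $\nu(H)=\zeta(H)-p$, that bound falls short by exactly $p$, the number of components that are cycles of length $\equiv 2\pmod 3$ — precisely the gap between $\zeta$ and $\nu$ that this family is designed to exhibit. So the argument must genuinely extract the extra ``$+1$'' contributed by each such component, and the disjoint‑union‑of‑powers mechanism is the natural place to get it.
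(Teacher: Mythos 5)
Your overall strategy (Theorem \ref{main-first} for the upper bound, a disjoint-union lower bound for the reverse inequality) is legitimate and genuinely different from the paper, which instead computes $\reg(I(H)^q)$ exactly by recursion: \cite[Lemma 8]{russ} for $q=1$, \cite[Theorem 5.7]{nguyen_vu} for the union of the cycles with $n_j\equiv 0,1\pmod 3$, and then, adding one cycle of length $\equiv 2\pmod 3$ at a time, \cite[Proposition 2.7]{HTT} for the square and \cite[Theorem 5.7]{nguyen_vu} for $q\ge 3$. However, your lower bound has a genuine error at its core: the claimed uniform formula $\reg(I(C_n)^t)=2t+\zeta(C_n)-1$ for all $t\ge 1$ is false when $n\equiv 2\pmod 3$. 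For such $n$ one has $\zeta(C_n)=\nu(C_n)+1$, while the Beyarslan--H\`a--Trung computation gives $\reg(I(C_n)^t)=2t+\nu(C_n)-1=2t+\zeta(C_n)-2$ for every $t\ge 2$; the value $2t+\zeta(C_n)-1$ is attained only at $t=1$. Hence the bracketed sum in your max-formula is \emph{not} independent of the admissible tuple, and your fallback witness $a_1=q,\ a_2=\cdots=a_r=1$ — which, with the labelling of the proposition, puts the exponent $q$ on a cycle of length $\equiv 2\pmod 3$ whenever $p\ge 1$ — yields only $\reg(I(H)^q)\ge 2q+\zeta(H)-2$ for $q\ge 2$. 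So the argument falls short by exactly $1$ precisely in the case the proposition is designed for ($p\ge 1$, $q\ge 2$).

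The gap is fixable within your framework. Since $r>p$, there is a component $C_{n_r}$ with $n_r\equiv 0,1\pmod 3$, and for these cycles $\reg(I(C_{n_r})^t)=2t+\zeta(C_{n_r})-1$ does hold for all $t\ge 1$ (there $\zeta=\nu$). Choose $a_r=q$ and $a_\ell=1$ for $\ell\neq r$: each cycle of length $\equiv 2\pmod 3$ is then kept at its first power, where it contributes its extra $+1$ via $\reg(I(C_{n_\ell}))=\zeta(C_{n_\ell})+1$, and the sum becomes $2q+\zeta(H)-1+(r-1)$, giving the desired bound after subtracting $r-1$. You must still supply, with a precise citation, the gluing inequality $\reg(I(G_1\coprod G_2)^t)\ge \reg(I(G_1)^{t_1})+\reg(I(G_2)^{t_2})-1$ for all $t_1+t_2=t+1$, $t_1,t_2\ge 1$, valid for \emph{every} $t$ — you rightly flag this as the delicate point, and it is not the eventual-linearity statement quoted before the proposition; an inequality of this type is what \cite{HTT} provides, and avoiding the need for it is exactly why the paper assembles exact values from \cite{HTT} and \cite{nguyen_vu} instead. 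With the corrected tuple and such a reference, your route gives a shorter proof than the paper's recursion; as written, it does not prove the statement.
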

\begin{proof}
If $q=1$, then by \cite[Lemma 8]{russ} and \cite[Theorem 7.6.28]{sean_thesis}, we get $\reg(I(H))=\nu(H)+p+1=\zeta(H)+1$. 
Suppose $p=0$. Using \cite[Theorem 5.2]{selvi_ha} and
\cite[Theorem 5.7]{nguyen_vu}, we get 
$\reg(I(H)^q)=2q+\nu(H)-1=2q+\zeta(H)-1 \text{ for all $q \geq 1$}.$
When $p \neq 0$, we can apply Proposition \ref{disjoint-reg}(2) with 
$G_1= \coprod_{i=1}^p C_{n_i}$ and $G_2= \coprod_{j=p+1}^r C_{n_j}$
and get 
$\reg(I(H)^q)=2q+\nu(H)+p-1=2q+\zeta(H)-1$ for all $q \geq 2$.
\end{proof}

In \cite{jayanthan}, the authors asked if there exists a graph 
$G$ with $2q + \nu(G) - 1 < \reg(I(G)^q) < 2q + \cochord(G) - 1$ for
all $q \gg 0$, \cite[Question 5.8]{jayanthan}. We show that some of
the graphs considered in Proposition
\ref{disjoint} satisfy this inequality.
Let $H$ be a graph as in Proposition \ref{disjoint}, with $n_j \equiv
1(\text{mod }3)$ and $n_j>4$ for $j = p+1, \ldots, r$, $p>0$. Then $\nu(H) =
\sum_{i=1}^r \lfloor\frac{n_i}{3} \rfloor$, $\zeta(H) =
p+ \sum_{i=1}^r \lfloor \frac{n_i}{3} \rfloor$ and $\cochord(H) =
r+ \sum_{i=1}^r \lfloor \frac{n_i}{3} \rfloor$. Therefore, we get for all $q \geq 1$,
$$2q+\nu(H)-1<\reg(I(H)^q) = 2q + \zeta(H) - 1 <2q+\cochord(H)-1.$$

\section{Regularity of powers of vertex decomposable graphs}\label{reg-vertex}
In this section, we prove Conjecture \ref{ABBH-conj} for vertex decomposable graphs.
We first recall the definition of a simplicial complex and a vertex decomposable graph.

A \textit{simplicial complex} $\Delta$ on $V = \{x_1,\ldots,x_n\}$ is
a collection of subsets of $V$ such that: 
\begin{enumerate}
 \item $\{x_i\}\in \Delta $ for $i =1,\ldots,n$, and
 \item if $F \in \Delta$ and $G \subseteq F$, then $G \in \Delta$.
\end{enumerate}
Elements of $\Delta$ are called the \textit{faces} of $\Delta$, and the maximal elements, with 
respect to inclusion, are called the facets. The link of a face $F$ in
$\Delta$ is $\link_\Delta(F) = \{F' \in \Delta \mid F' \cup F \in \Delta, ~F' \cap F = \emptyset \}$.

A simplicial complex $\Delta$ is
recursively defined to be {\em vertex decomposable} if it is either a
simplex or else has some vertex $v$ such that 
\begin{enumerate}
  \item both $\Delta \setminus v$ and $\link_\Delta v$ are vertex decomposable, and
  \item no face of $\link_\Delta v$ is a facet of $\Delta \setminus v$.
\end{enumerate}

The \textit{independence complex} of $G$, denoted by $\Delta(G)$, is the simplicial
complex on $V(G)$ with face set 
$$\Delta(G)=\Big\{F \subseteq V(G) \mid F \text{ is an independent set of $G$ } \Big\}.$$
A graph $G$ is said to be \textit{vertex decomposable} if $\Delta(G)$ is a
vertex decomposable simplicial complex. 
In \cite{Wood2}, Woodroofe translated the notion of vertex decomposable
for graphs as follows.
\begin{definition}\cite[Lemma 4]{Wood2} \label{wood-def}
 A graph $G$ is recursively defined to be vertex decomposable if 
 $G$ is totally disconnected (with no edges) or if there is a vertex $x$ in $G$
 such that
 \begin{enumerate}
  \item   $G \setminus x$ and 
  $G \setminus N_G[x]$ are both vertex decomposable, and
  \item no independent set in $G\setminus N_G[x]$ is a maximal independent set 
  in $G\setminus x$.
 \end{enumerate}
\end{definition}
A vertex $x$ which satisfies the second condition of Definition \ref{wood-def} is called a \textit{shedding vertex} of
$G$.
 If $G$ is a vertex decomposable graph, then by \cite[Theorem 2.5]{BFH15},
 $G \setminus N_G[x]$ is a vertex decomposable graph, for any  $x \in V(G)$.
 For any vertex decomposable graph $K$, set 
 $$
\s(K)=\Big \{ x \in V(K) \mid \text{$x$ is a shedding vertex and 
$K \setminus x$ is a vertex decomposable graph} \Big\}.
$$
Note that if $K$ is vertex decomposable, then $\s(K) \neq \emptyset$.
The following observation is crucial for the
proof of Theorem \ref{scm-bipartite}.
 \begin{obs} \label{reg-obs}
 Let $G$ be a vertex decomposable graph and $x \in \s(G)$. By \cite[Theorem 4.2]{HaWood},
 $$\reg(I(G))=\max\Big\{\reg(I(G \setminus x)), ~\reg(I(G \setminus N_G[x]))+1\Big\}.$$
 Therefore, $\reg(I(G \setminus N_G[x]))+1 \leq \reg(I(G))$.
 \end{obs}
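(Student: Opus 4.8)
The plan is to treat this as an immediate consequence of the Hà--Woodroofe recursive formula for the regularity of edge ideals at a shedding vertex: once that formula is quoted, the asserted inequality is just the statement that a maximum of two quantities dominates each of them separately.

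First I would unpack the hypothesis. Membership $x \in \s(G)$ means precisely that $x$ is a shedding vertex of $G$ \emph{and} that $G \setminus x$ is again vertex decomposable. Since $G$ is vertex decomposable, $G \setminus N_G[x]$ is vertex decomposable as well by \cite[Theorem 2.5]{BFH15}. Hence both graphs $G \setminus x$ and $G \setminus N_G[x]$ appearing on the right-hand side are honest vertex decomposable graphs, and all three regularities in the claimed identity are well-defined invariants.

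Next I would apply \cite[Theorem 4.2]{HaWood} to obtain the equality
\[
 \reg(I(G)) = \max\Big\{\reg(I(G \setminus x)),\ \reg(I(G \setminus N_G[x]))+1\Big\}.
\]
For orientation, the inequality $\leq$ here is the standard consequence of the short exact sequence
\[
 0 \longrightarrow \big(R/(I(G):x)\big)(-1) \overset{\cdot x}{\longrightarrow} R/I(G) \longrightarrow R/(I(G),x) \longrightarrow 0,
\]
together with the identifications $\reg(R/(I(G):x)) = \reg(R/I(G \setminus N_G[x]))$ and $\reg(R/(I(G),x)) = \reg(R/I(G \setminus x))$; the reverse inequality, i.e.\ the equality itself, is exactly what the shedding and vertex-decomposability hypotheses buy us through the cited theorem.

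Finally the conclusion is immediate: the right-hand side is a maximum of two real numbers, so it is at least the second of them, giving
\[
 \reg(I(G \setminus N_G[x]))+1 \ \leq\ \max\Big\{\reg(I(G \setminus x)),\ \reg(I(G \setminus N_G[x]))+1\Big\} = \reg(I(G)).
\]
There is no genuine obstacle in this argument; the only point that needs checking is that $x \in \s(G)$ really does satisfy the hypotheses of \cite[Theorem 4.2]{HaWood}, and this is guaranteed verbatim by the definition of $\s(G)$.
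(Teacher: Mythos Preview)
Your proposal is correct and follows exactly the same approach as the paper: the observation in the paper is not given a separate proof at all, since the equality is simply quoted from \cite[Theorem 4.2]{HaWood} and the inequality is then the trivial fact that a maximum dominates each of its arguments. Your additional remarks about the short exact sequence and the vertex decomposability of $G\setminus N_G[x]$ are correct supplementary context, but they are not needed for the argument itself.
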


We prove Conjecture \ref{ABBH-conj} for the class of vertex decomposable graphs.
Since induced
subgraphs of a vertex decomposable graph are not necessarily be
vertex decomposable, we cannot apply Theorem \ref{main-result} to get
the desired inequality. 
However, we can prove Conjecture \ref{ABBH-conj} for the class of vertex decomposable graphs
almost verbatim of the proof of Theorem \ref{main-result} and we sketch the 
proof with the same notation as in the proof of Theorem \ref{main-result}.

\begin{theorem}\label{scm-bipartite}\mbox{}
If $G$ is a vertex decomposable graph, then for all  $q \geq 1$,
\[
 \reg(I(G)^q) \leq 2q+\reg(I(G))-2.
\]
\end{theorem}
\begin{proof}
By applying \cite[Theorem 5.2]{banerjee} and using induction on $q$, it is
enough to prove that for any $s \geq 0$ and any minimal generator $M$
of $I(G)^s, ~\reg(I(G)^{s+1}:M) \leq \reg(I(G)).$ 
Let $G'$ be the graph associated
to the ideal $\widetilde{(I(G)^{s+1}:e_1\cdots e_s)}$ which is
contained in an appropriate polynomial ring $R_1$, where $e_1, \ldots, e_s \in E(G)$. We prove that
$\reg(I(G')) \leq \reg(I(G))$ by induction on $s+|V(G)|$ which completes the proof. 
When either $s=0$ or $|V(G)|=2$, we have $G'=G$ and the assertion is clear.
Now, assume that $s \geq 1$ and $|V(G)| > 2$.
Let $e_i = \{a_i, b_i\}$. If
$\deg_G(a_i) = 1$ or $\deg_G(b_i) = 1$ for some $i$, then the
assertion follows as in the proof of Theorem \ref{main-result}.
Therefore, we may assume that $\deg_G(a_i) \geq 2$ and $\deg_G(b_i)
\geq 2$ for all $1 \leq i \leq s$.

 \vskip 1mm \noindent
\textsc{Case 1:} Suppose $e_i \cap \s(G) \neq \emptyset$, for some $1
\leq i \leq s$. 
\vskip 1mm \noindent
Without loss of generality, assume that $e_s \cap \s(G) \neq \emptyset$ and $a_s \in \s(G)$.
Now proceeding as in the proof Theorem \ref{main-result} with the same
notation, one gets 
\begin{eqnarray*}
\reg(J+(\X_{b_s}))  &\leq & \reg(I((G \setminus N_{G}[b_s])'))\leq \reg(I(G \setminus N_G[b_s])) \leq
  \reg(I(G));  \\  
  \reg(J : y_i) & \leq & \reg(I((G \setminus N_G[y_i,a_s])')) \leq \reg(I((G \setminus N_G[a_s])')) \\
  & \leq&  \reg(I(G \setminus N_G[a_s])) < \reg(I(G)).
\end{eqnarray*}
Here, we use  Lemmas  \ref{ind-reg}, \ref{even-lemma}
and inductive hypothesis (since $G \setminus N_G[b_s]$ and $G \setminus N_G[a_s]$
are vertex decomposable graphs) along with Observation \ref{reg-obs}
for the above conclusions.

Using these inequalities, we conclude, as in the proof
of Theorem \ref{main-result}, that $\reg(J) \leq \reg(I(G))$.

\vskip 1mm \noindent
\textsc{Case 2:}
Suppose $e_i \cap \s(G) = \emptyset$, for all $1 \leq i \leq s$.
Let $x \in \s(G)$.  By \cite[Theorem 3.4]{Ha2}, 
$\reg(I(G')) \leq \max
\Big\{\reg(I(G' \setminus x)), \reg(I(G' \setminus
N_{G'}[x])+1\Big\}.$
Since $G \setminus x$ is vertex decomposable, we can use Lemmas
\ref{single_vertex}, \ref{tech_lemma} along with Observation
\ref{reg-obs} to derive $\reg(I(G')) \leq \reg(I(G))$ as done in
the proof of \textsc{Case 2} in Theorem \ref{main-result}.


This completes the proof.
\end{proof}

As an immediate consequence of the above result, we obtain the linear
polynomial corresponding to $\reg(I(G)^q)$ for several 
classes of graphs.
A \textit{simplicial vertex} of a graph $G$ is a vertex $x$ such that the neighbors of $x$ form a complete subgraph in $G$.

\begin{corollary}\label{main-cor} 
Let $G$ be a graph with one of the following properties:
\begin{enumerate}
 \item vertex decomposable and contains no 5-cycles;
 \item vertex decomposable and contains no induced 5-cycles and 4-cycles;
 \item for any independent set $A$, the graph $G \setminus N_G[A]$ is a collection
 of isolated vertices or has a simplicial vertex of degree at least one;
 \item  sequentially Cohen-Macaulay bipartite;
 \item  obtained from a graph $H$ by 
 adding whiskers on a subset $S$ of the vertex set of $H$ such that $H \setminus S$ is 
   chordal.
\end{enumerate}
Then for all $q \geq 1$, $$\reg(I(G)^q)=2q+\nu(G)-1.$$ 
\end{corollary}
\begin{proof} (1) \& (2): From \cite[Theorem 2.4]{khosh_moradi} and
\cite[Theorem 24]{BC}, it follows that for these classes of graph,
$\reg(I(G)) = \nu(G) + 1$. Hence the assertion follows from
\cite[Theorem 4.5]{selvi_ha} and Theorem \ref{scm-bipartite}.
\vskip 1mm \noindent
(3) First we prove that $\reg(I(G))\leq 
\nu(G)+1$, by induction on $|V(G)|$. Since, by our
assumption, $G$ does not have isolated vertices, the assertion is
immediate for the base case $|V(G)| = 2$. Assume now that $|V(G)| >
2$.
Since the empty set is independent, there is a simplicial vertex $x$ in $G$.
Let $N_G(x)=\{x_1,\ldots,x_m\}$ with $m \geq 1$. 
Set $I(G)=I_0$ and $I_l=I_0+(x_1,\ldots,x_l)$ for $1 \leq l \leq m$. Then, for $0 \leq l \leq m-1$, we have
\begin{equation*}\label{exact1}
 0 \longrightarrow \frac{R}{(I_l:x_{l+1})}(-1)
 \overset{\cdot x_{l+1}}{\longrightarrow} \frac{R}{I_l} \longrightarrow \frac{R}{I_l+(x_{l+1})}
 \longrightarrow 0.
\end{equation*}
Therefore,
\[
 \reg(I_0) \leq \max \{ \reg(I_m), \reg(I_l:x_{l+1})+1: 0 \leq l \leq m-1\}.
\]

Since $\{x_1, \ldots, x_l\} \subset N_G[x_{l+1}]$, 
$((G \setminus N_G[x_{l+1}])\setminus \{x_1,\ldots,x_l\})=G \setminus N_G[x_{l+1}] \text{ for any } 
0 \leq l \leq m-1.$ Hence $(I_l : x_{l+1}) = (I_0 : x_{l+1})$.
It is easy to see that
for any vertex $u$ of $G$ and an independent set $B$ of $G \setminus
N_G[u]$, the set $B \cup \{u\}$ is an independent set of $G$.
Then $G\setminus N_G[u]$ is also a graph with the property (3).
Therefore, we may apply inductive hypothesis to get
$$\reg(I_m)=\reg(I(G \setminus N_G[x])) \leq
 \nu(G \setminus N_G[x])+1 \leq \nu(G)+1$$
 and for any  $0 \leq l \leq m-1,$
 $$\reg(I_l:x_{l+1})= \reg(I_0:x_{l+1})=\reg(I(G \setminus N_G[x_{l+1}])) \leq \nu(G \setminus N_G[x_{l+1}])+1.$$

 If $\{f_1,\ldots,f_t\}$ is an induced matching of $G \setminus N_G[x_{l+1}]$,
 then $\{f_1,\ldots,f_t,\{x,x_{l+1}\}\}$ is an induced matching of $G$.
 Therefore $\nu(G \setminus N_G[x_{l+1}])+1 \leq \nu(G)$. Hence
 $ \reg(I(G))\leq \nu(G)+1.$
 By \cite[Corollary 5.5]{Russ11}, $G$ is a vertex decomposable graph. Therefore, by
 Theorem \ref{scm-bipartite} and \cite[Theorem 4.5]{selvi_ha},
 $\reg(I(G)^q)=2q+\nu(G)-1.$

 \vskip 1mm \noindent
 (4) By \cite[Theorem 2.10]{adam}, $G$ is vertex decomposable. 
  Since a bipartite graph  contains no 5-cycles, the assertion follows from
  (1).
  \vskip 1mm \noindent
 (5) In \cite{Dirac61}, Dirac proved that a graph $L$ is chordal if and only if every induced subgraph of 
 $L$ has a simplicial vertex. 
 Let $A$ be any independent set of $G$. If $S \subseteq N_G[A]$, then
 $G\setminus N_G[A]$ is chordal or isolated vertices.
 If $S\varsubsetneq N_G[A]$, then $G \setminus N_G[A]$ has atleast one 
 whisker and hence $G \setminus N_G[A]$ has a simplicial vertex. 
 Therefore $G \setminus N_G[A]$ has a simplicial vertex for any independent
 set $A$. Hence, by (3), $\reg(I(G)^q)=2q+\nu(G)-1$ for all $q \geq 1$.
\end{proof}

\vspace*{2mm} \noindent
\textbf{Acknowledgement:} We would like to thank  Fahimeh Khosh-Ahang,
Huy T{\`a}i H\`a, Adam Van Tuyl and Russ Woodroofe for several
clarifications on our doubts on their results. We also would like to
thank Arindam Banerjee and Huy T\`ai H\`a for pointing out an error in
one of the proofs in an earlier version of the manuscript. We
extensively used \textsc{Macaulay2}, \cite{M2}, \textsc{SAGE},
\cite{sage} and the package \textsc{SimplicialDecomposability},
\cite{Cook}, for testing our computations. 
The second author is partially supported by DST, Govt of India under the DST-INSPIRE
Faculty Scheme.
We would also like to express our sincere gratitude to anonymous
referees for meticulous reading and suggesting several improvements.

\bibliographystyle{abbrv}  
\bibliography{refs_reg}

\begin{thebibliography}{10}

\bibitem{banerjee1}
A.~Alilooee and A.~Banerjee.
\newblock Powers of edge ideals of regularity three bipartite graphs.
\newblock {\em J. Commut. Algebra}, 9(4):441--454, 2017.

\bibitem{banerjee}
A.~Banerjee.
\newblock The regularity of powers of edge ideals.
\newblock {\em J. Algebraic Combin.}, 41(2):303--321, 2015.

\bibitem{BBH17}
A.~Banerjee, S.~K. Beyarslan, and H.~T. H{\`a}.
\newblock Regularity of edge ideals and their powers.
\newblock In {\em Advances in algebra}, volume 277 of {\em Springer Proc. Math.
  Stat.}, pages 17--52. Springer, Cham, 2019.

\bibitem{BBH19}
A.~Banerjee, S.~K. Beyarslan, and H.~T. H\`a.
\newblock Regularity of powers of edge ideals: from local properties to global
  bounds.
\newblock {\em Algebr. Comb.}, 3(4):839--854, 2020.

\bibitem{BN19}
A.~{Banerjee} and E.~{Nevo}.
\newblock {Regularity of Edge Ideals Via Suspension}.
\newblock {\em arXiv e-prints, {\em arXiv:1908.03115}}, Aug 2019.

\bibitem{Berle}
D.~Berlekamp.
\newblock Regularity defect stabilization of powers of an ideal.
\newblock {\em Math. Res. Lett.}, 19(1):109--119, 2012.

\bibitem{BEL91}
A.~Bertram, L.~Ein, and R.~Lazarsfeld.
\newblock Vanishing theorems, a theorem of {S}everi, and the equations defining
  projective varieties.
\newblock {\em J. Amer. Math. Soc.}, 4(3):587--602, 1991.

\bibitem{selvi_ha}
S.~Beyarslan, H.~T. H{\`a}, and T.~N. Trung.
\newblock Regularity of powers of forests and cycles.
\newblock {\em J. Algebraic Combin.}, 42(4):1077--1095, 2015.

\bibitem{BFH15}
J.~Biermann, C.~A. Francisco, H.~T. H\`a, and A.~Van~Tuyl.
\newblock Partial coloring, vertex decomposability, and sequentially
  {C}ohen-{M}acaulay simplicial complexes.
\newblock {\em J. Commut. Algebra}, 7(3):337--352, 2015.

\bibitem{BC}
T.~B{\i}y{\i}ko{\u{g}}lu and Y.~Civan.
\newblock Vertex-decomposable graphs, codismantlability,
  {C}ohen-{M}acaulayness, and {C}astelnuovo-{M}umford regularity.
\newblock {\em Electron. J. Combin.}, 21(1):Paper 1.1, 17pp, 2014.

\bibitem{BjWachs}
A.~Bj\"orner and M.~L. Wachs.
\newblock Shellable nonpure complexes and posets. {II}.
\newblock {\em Trans. Amer. Math. Soc.}, 349(10):3945--3975, 1997.

\bibitem{busch_dragan_sritharan}
A.~H. Busch, F.~F. Dragan, and R.~Sritharan.
\newblock New min-max theorems for weakly chordal and dually chordal graphs.
\newblock In {\em Combinatorial optimization and applications. {P}art {II}},
  volume 6509 of {\em Lecture Notes in Comput. Sci.}, pages 207--218. Springer,
  Berlin, 2010.

\bibitem{CamWalker}
K.~Cameron and T.~Walker.
\newblock The graphs with maximum induced matching and maximum matching the
  same size.
\newblock {\em Discrete Math.}, 299(1-3):49--55, 2005.

\bibitem{Cha97}
K.~A. Chandler.
\newblock Regularity of the powers of an ideal.
\newblock {\em Comm. Algebra}, 25(12):3773--3776, 1997.

\bibitem{Cook}
D.~Cook, II.
\newblock Simplicial decomposability.
\newblock {\em J. Softw. Algebra Geom.}, 2:20--23, 2010.

\bibitem{CHT}
S.~D. Cutkosky, J.~Herzog, and N.~V. Trung.
\newblock Asymptotic behaviour of the {C}astelnuovo-{M}umford regularity.
\newblock {\em Compositio Math.}, 118(3):243--261, 1999.

\bibitem{sage}
T.~S. Developers.
\newblock {\em {S}age {M}athematics {S}oftware ({V}ersion 6.9)}, 2015.
\newblock {\tt http://www.sagemath.org}.

\bibitem{Dirac61}
G.~A. Dirac.
\newblock On rigid circuit graphs.
\newblock {\em Abh. Math. Sem. Univ. Hamburg}, 25:71--76, 1961.

\bibitem{GGP95}
A.~V. Geramita, A.~Gimigliano, and Y.~Pitteloud.
\newblock Graded {B}etti numbers of some embedded rational {$n$}-folds.
\newblock {\em Math. Ann.}, 301(2):363--380, 1995.

\bibitem{M2}
D.~R. Grayson and M.~E. Stillman.
\newblock Macaulay2, a software system for research in algebraic geometry.
\newblock Available at \url{http://www.math.uiuc.edu/Macaulay2/}.

\bibitem{Ha2}
H.~T. H{\`a}.
\newblock Regularity of squarefree monomial ideals.
\newblock In {\em Connections between algebra, combinatorics, and geometry},
  volume~76 of {\em Springer Proc. Math. Stat.}, pages 251--276. Springer, New
  York, 2014.

\bibitem{HTT}
H.~T. H{\`a}, N.~V. Trung, and T.~N. Trung.
\newblock Depth and regularity of powers of sums of ideals.
\newblock {\em Math. Z.}, 282(3-4):819--838, 2016.

\bibitem{HaWood}
H.~T. H\`a and R.~Woodroofe.
\newblock Results on the regularity of square-free monomial ideals.
\newblock {\em Adv. in Appl. Math.}, 58:21--36, 2014.

\bibitem{Herzog'sBook}
J.~Herzog and T.~Hibi.
\newblock {\em Monomial ideals}, volume 260 of {\em Graduate Texts in
  Mathematics}.
\newblock Springer-Verlag London, Ltd., London, 2011.

\bibitem{HHZ}
J.~Herzog, T.~Hibi, and X.~Zheng.
\newblock Monomial ideals whose powers have a linear resolution.
\newblock {\em Math. Scand.}, 95(1):23--32, 2004.

\bibitem{HHKO}
T.~Hibi, A.~Higashitani, K.~Kimura, and A.~B. O'Keefe.
\newblock Algebraic study on {C}ameron-{W}alker graphs.
\newblock {\em J. Algebra}, 422:257--269, 2015.

\bibitem{hibi}
T.~Hibi, A.~Higashitani, K.~Kimura, and A.~Tsuchiya.
\newblock Dominating induced matchings of finite graphs and regularity of edge
  ideals.
\newblock {\em J. Algebraic Combin.}, 43(1):173--198, 2016.

\bibitem{sean_thesis}
S.~Jacques.
\newblock {\em Betti numbers of graph ideals}.
\newblock PhD thesis, University of Sheffield, 2004.

\bibitem{jayanthan}
A.~V. Jayanthan, N.~Narayanan, and S.~Selvaraja.
\newblock Regularity of powers of bipartite graphs.
\newblock {\em J. Algebraic Combin.}, 47(1):17--38, 2018.

\bibitem{JS19}
A.~V. Jayanthan and S.~Selvaraja.
\newblock Linear polynomials for the regularity of powers of edge ideals of
  very well-covered graphs.
\newblock {\em J. Commut. Algebra (to appear), arXiv e-prints,
  arXiv:1708.06883}, Aug 2017.

\bibitem{kat}
M.~Katzman.
\newblock Characteristic-independence of {B}etti numbers of graph ideals.
\newblock {\em J. Combin. Theory Ser. A}, 113(3):435--454, 2006.

\bibitem{khosh_moradi}
F.~Khosh-Ahang and S.~Moradi.
\newblock Regularity and projective dimension of the edge ideal of {$C_5$}-free
  vertex decomposable graphs.
\newblock {\em Proc. Amer. Math. Soc.}, 142(5):1567--1576, 2014.

\bibitem{vijay}
V.~Kodiyalam.
\newblock Asymptotic behaviour of {C}astelnuovo-{M}umford regularity.
\newblock {\em Proc. Amer. Math. Soc.}, 128(2):407--411, 2000.

\bibitem{Morey}
S.~Morey.
\newblock Depths of powers of the edge ideal of a tree.
\newblock {\em Comm. Algebra}, 38(11):4042--4055, 2010.

\bibitem{nguyen_vu}
H.~D. Nguyen and T.~Vu.
\newblock Powers of sums and their homological invariants.
\newblock {\em J. Pure Appl. Algebra}, 223(7):3081--3111, 2019.

\bibitem{ProvLouis}
J.~S. Provan and L.~J. Billera.
\newblock Decompositions of simplicial complexes related to diameters of convex
  polyhedra.
\newblock {\em Math. Oper. Res.}, 5(4):576--594, 1980.

\bibitem{Romer2001}
T.~R\"omer.
\newblock Homological properties of bigraded algebras.
\newblock {\em Illinois J. Math.}, 45(4):1361--1376, 2001.

\bibitem{SY18}
S.~A. {Seyed Fakhari} and S.~{Yassemi}.
\newblock {Improved bounds for the regularity of powers of edge ideals of
  graphs}.
\newblock {\em arXiv e-prints, arXiv:1805.12508}, May 2018.

\bibitem{Swa97}
I.~Swanson.
\newblock Powers of ideals. {P}rimary decompositions, {A}rtin-{R}ees lemma and
  regularity.
\newblock {\em Math. Ann.}, 307(2):299--313, 1997.

\bibitem{TW}
N.~V. Trung and H.-J. Wang.
\newblock On the asymptotic linearity of {C}astelnuovo-{M}umford regularity.
\newblock {\em J. Pure Appl. Algebra}, 201(1-3):42--48, 2005.

\bibitem{Trung18}
T.~N. Trung.
\newblock Regularity, matchings and {C}ameron-{W}alker graphs.
\newblock {\em Collect. Math.}, 71(1):83--91, 2020.

\bibitem{adam}
A.~Van~Tuyl.
\newblock Sequentially {C}ohen-{M}acaulay bipartite graphs: vertex
  decomposability and regularity.
\newblock {\em Arch. Math. (Basel)}, 93(5):451--459, 2009.

\bibitem{Wood2}
R.~Woodroofe.
\newblock Vertex decomposable graphs and obstructions to shellability.
\newblock {\em Proc. Amer. Math. Soc.}, 137(10):3235--3246, 2009.

\bibitem{Russ11}
R.~Woodroofe.
\newblock Chordal and sequentially {C}ohen-{M}acaulay clutters.
\newblock {\em Electron. J. Combin.}, 18(1):Paper 208, 20pp, 2011.

\bibitem{russ}
R.~Woodroofe.
\newblock Matchings, coverings, and {C}astelnuovo-{M}umford regularity.
\newblock {\em J. Commut. Algebra}, 6(2):287--304, 2014.

\end{thebibliography}
\end{document}